\theoremstyle{plain} 
\newtheorem{theorem}{Theorem}[section]
\newtheorem{lemma}[theorem]{Lemma}
\newtheorem{proposition}[theorem]{Proposition}
\newtheorem{corollary}[theorem]{Corollary}
\newtheorem{problem}[theorem]{Problem}
\theoremstyle{remark} 
\newtheorem{example}[theorem]{Example}
\numberwithin{equation}{section}
\newcommand{\seclabel}[1]{\label{sec:#1}} 
\newcommand{\thmlabel}[1]{\label{thm:#1}} 
\newcommand{\lemlabel}[1]{\label{lem:#1}} 
\newcommand{\corlabel}[1]{\label{cor:#1}} 
\newcommand{\prplabel}[1]{\label{prp:#1}} 
\newcommand{\exmlabel}[1]{\label{exm:#1}} 
\newcommand{\eqnlabel}[1]{\label{eqn:#1}} 
\newcommand{\thmref}[1]{\ref{thm:#1}} 
\newcommand{\lemref}[1]{\ref{lem:#1}} 
\newcommand{\prpref}[1]{\ref{prp:#1}} 
\newcommand{\eqnref}[1]{\eqref{eqn:#1}} 
\newcommand{\DDD}{\mathcal{D}}
\newcommand{\RRR}{\mathcal{R}}
\newcommand{\LLL}{\mathcal{L}}
\newcommand{\greenD}{\thinspace\mathcal{D}\thinspace}
\newcommand{\greenL}{\thinspace\mathcal{L}\thinspace}
\newcommand{\greenR}{\thinspace\mathcal{R}\thinspace}
\newcommand{\defn}[1]{\emph{\textbf{#1}}}  
\newcommand{\mypara}{\ /\!/\ }
\author{Michael Kinyon}
\address{Department of Mathematics \\
University of Denver \\
Denver, CO 80208 USA}
\email{mkinyon@du.edu}
\author{Jonathan Leech}
\address{Department of Mathematics\\
Westmont College \\
955 La Paz Road\\
Santa Barbara, CA 93108 USA}
\email{leech@westmont.edu}
\title{Categorical Skew Lattices}
\date{Last updated: \today}
\begin{document}

\begin{abstract}
Categorical skew lattices are a variety of skew lattices 		
on which the natural partial order is especially well behaved.  While
most skew lattices of interest are categorical, not all are.  They are	
characterized by a countable family of forbidden subalgebras.  We
also consider the subclass of strictly categorical skew lattices.
\end{abstract}

\maketitle

\section{Introduction and Background}
\seclabel{intro}

A \defn{skew lattice} is an algebra $\mathbf{S}=(S;\lor,\land)$ where $\lor$ and $\land$ are associative, idempotent binary operations satisfying the absorption identities
\begin{equation}
\eqnlabel{absorb}
x\land (x\lor y) = x = (y\lor x)\land x
\qquad\text{and}\qquad
x\lor (x\land y) = x = (y\land x)\lor x\,.
\end{equation}
Given that $\lor$ and $\land$ are associative and idempotent, \eqnref{absorb} is equivalent to the dualities:
\begin{equation}
\eqnlabel{dualities}
x\land y = x\quad\text{iff}\quad x\lor y = y
\qquad\text{and}\qquad x\land y = y\quad\text{iff}\quad x\lor y = x\,.
\end{equation}	
Every skew lattice has a \defn{natural preorder} (or quasi-order) defined by
\begin{equation}
\eqnlabel{preorder}
x \succeq y\quad \Leftrightarrow\quad  x\lor y\lor x = x\quad\text{or equivalently}\quad
y\land x\land y = y\,.
\end{equation}
The \defn{natural partial order} is defined by
\begin{equation}
\eqnlabel{order}
x \geq y\quad \Leftrightarrow\quad  x\lor y = x = y\lor x\quad\text{or equivalently}\quad
x\land y = y = y\land x\,.
\end{equation}
The latter refines the former in that $x\succeq y$ implies $x\geq y$ but not conversely. 
In what follows, any mentioned preordering or partial ordering of a skew lattice is assumed to be natural. Of course
$x > y$ means $x \geq y$ but $x\neq y$; likewise, $x \succ y$ means $x \succeq y$ but not $y \succeq x$.

A natural model of a skew lattice is given by any set of idempotents $S$ in a ring $R$ that is closed under
the operations $\land$ and $\lor$ defined in terms of addition and multiplication by $x\land y = xy$ and
$x\lor y = x + y - xy$. Another natural model is given by the set of all partial functions $\mathcal{P}(X,Y)$
from a set $X$ to a set $Y$, where for partial functions $f,g\in \mathcal{P}(X,Y)$, $f\land g =
g|_{\mathrm{dom}(f)\cap \mathrm{dom}(g)}$ and $f\lor g = f \cup g|_{\mathrm{dom}(g)\backslash \mathrm{dom}(f)}$.

Every skew lattice is \defn{regular} in that the identity
$x\circ y\circ x\circ z\circ x = x\circ y\circ z\circ x$ holds for both $\circ = \lor$ and $\circ = \land$ (see \cite[Theorem 1.15]{Leec89} or \cite[Theorem 1.11]{Leec96}). As a consequence, one quickly gets:
\begin{subequations}
\begin{alignat}{2}
x\lor y\lor x'\lor z\lor x'' &= x\lor y\lor z\lor x'' &&\qquad\text{if}\quad x'\preceq x, x''
\eqnlabel{regular1} \\
\intertext{and}
x\land y\land x'\land z\land x'' &= x\land y\land z\land x'' &&\qquad\text{if}\quad x' \succeq x, x''\,.
\eqnlabel{regular2}
\end{alignat}
\end{subequations}

In any lattice, $\geq$ and $\succeq$ are identical, with $\lor$ and $\land$ determined by $s\lor y= \sup\{x,y\}$ and $x\land y = \inf\{x,y\}$. For skew lattices, the situation is more complicated. To see what happens, we must first recall several fundamental aspects of skew lattices. The preorder $\succeq$ induces a natural equivalence $\DDD$ defined by $x\greenD y$ if $x\succeq y\succeq x$. This is one of three \defn{Green's relations} defined by:
\begin{alignat*}{3}
x\greenR y &\ \Leftrightarrow\ & (x\land y = y\ \&\ y\land x = x) &\ \Leftrightarrow\ &
(x\lor y = x\ \&\ y\lor x = y)\,.     \tag{$\RRR$} \\
x\greenL y &\ \Leftrightarrow\ & (x\land y = x\ \&\ y\land x = y)&\ \Leftrightarrow\ &
(x\lor y = y\ \&\ y\lor x = x)\,.     \tag{$\LLL$} \\
x\greenD y &\ \Leftrightarrow\ & (x\land y\land x = x\ \&\ y\land x\land y = y) &\ \Leftrightarrow\ &
(x\lor y\lor x = x\ \&\ y\lor x\lor y = y)\,.     \tag{$\DDD$}
\end{alignat*}
$\RRR$, $\LLL$ and $\DDD$ are congruences on any skew lattice, with
$\greenL\lor \greenR = \greenL\circ \greenR = \greenR\circ \greenL = \greenD$ and $\greenL\cap \greenR = \Delta$, the identity equivalence. Their congruence classes (called $\greenR$-classes, $\greenL$-classes or $\greenD$-classes) are all rectangular subalgebras. (A skew lattice is \defn{rectangular} if $x\land y\land x = x$, or equivalently, $x\lor y\lor x = x$, or also equivalently, $x\land y = y\lor x$ holds. These are precisely the anti-commutative skew lattices in that $x\land y = y\land x$ or $x\lor y = y\lor x$ imply $x = y$. See \cite[{\S}1]{Leec89}
or recently, \cite[{\S}1]{KL}.) The Green's congruence classes of a an element $x$ are denoted, respectively, by $\mathcal{R}_x$, $\mathcal{L}_x$ or $\mathcal{D}_x$.

The First Decomposition Theorem for Skew Lattices \cite[Theorem 1.7]{Leec89} states: \emph{Given a skew lattice} $\mathbf{S}$, \emph{each} $\DDD$-\emph{class is a maximal rectangular subalgebra of} $\mathbf{S}$ \emph{and} $\mathbf{S}/\DDD$ \emph{is the maximal lattice image of} $\mathbf{S}$.  In brief, \emph{every skew lattice is a lattice of rectangular [anticommutative] subalgebras} in that it looks roughly like a lattice whose points are rectangular skew lattices.
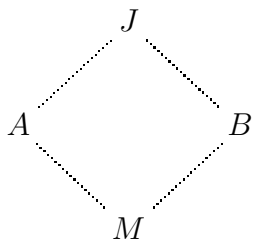
\begin{figure}[h!]
\begin{centerline}
{\xymatrix{
                & J                         &                       \\
A \ar@{.}[ur]   &                           & B \ar@{.}[ul]   \\
                & M \ar@{.}[ul] \ar@{.}[ur] &
}}
\end{centerline}
\caption{$A$, $B$, $J$, \& $M$ are maximal rectangular subalgebras}
\end{figure}
Clearly $x \succeq y$ in $\mathbf{S}$ if and only if $D_x \geq D_y$ in the lattice $\mathbf{S}/\DDD$ where $D_x$ and $D_y$ are the $\DDD$-classes of $x$ and $y$, respectively. Given $a\in A$ and $b\in B$ for $\DDD$-classes $A$ and $B$, $a\lor b$ just lie in their join $\DDD$-class $J$; similarly $a\land b$ must lie in their meet $\DDD$-class $M$.

Our interest in this paper is in \defn{skew chains} that consist of totally ordered families of $\DDD$-classes: $A > B > \cdots > X$. As a (sub-)skew lattice, a skew chain $T$ is \defn{totally preordered}: given $x,y\in T$, either $x\preceq y$ or $y\preceq x$. Of special interest are skew chains of length $1$ ($A>B$) called \defn{primitive skew lattices}, and skew chains of length $2$ ($A>B>C$) that occur in skew lattices.

Given a primitive skew lattice with $\DDD$-class structure $A>B$, an $\mathbf{A}$-\defn{coset in} $\mathbf{B}$ is any subset of $B$ of the form
\[
A\land b\land A = \{a\land b\land a'\mid a, a'\in A\} = \{a\land b\land a\mid a\in A\}
\]
for some $b \in B$. (The second equality follows from \eqnref{regular2}.)  Any two $A$-cosets in $B$ are either identical or else disjoint. Since $b$ must lie in $A\land b\land A$ for all $b \in B$, \emph{the} $A$-\emph{cosets in} $B$ \emph{form a partition of} $B$.  Dually a $B$-\defn{coset in} $A$ is a subset of $A$ of the form
\[
B\lor a\lor B = \{b\lor a\lor b\mid  b, b'\in B\} = \{b\lor a\lor b\mid b\in B\}
\]
for some $a \in A$.  Again, the $B$-cosets in $A$ partition $A$. Given a $B$-coset $X$ in $A$ and an $A$-coset $Y$ in $B$, the natural partial ordering induces a \defn{coset bijection} $\varphi : X\to Y$ given by $\varphi(a) = b$ for $a\in X$ and $b\in Y$ if and only if $a > b$, in which case $b = \varphi(a) = a\land y\land a$ for \emph{all} $y\in Y$ and $a = \varphi^{-1}(b) = b\lor x\lor b$ for \emph{all} $x\in X$. Cosets are rectangular subalgebras of their $\DDD$-classes; moreover, all coset bijections are isomorphisms between these subalgebras.  \emph{All} $A$-\emph{cosets in} $B$ \emph{and all} $B$-\emph{cosets in} $A$ \emph{thus share a common size and structure}.  If $a, a'\in A$ lie in a common $B$-coset, we denote this by $a -_B a'$; likewise $b -_A b'$ in $B$ if $b$ and $b'$ lie in a common $A$-coset.  This is illustrated in the partial configuration below where $\ddots$ and $\iddots$ indicate $>$ between $a$'s and $b$'s. (The coset bijections from $\{a_1, a_2\}$ to $\{b_3, b_4\}$ and from $\{a_5, a_6\}$ to $\{b_1, b_2\}$ are not shown.)
\[
{\xymatrix{
a_1 -_B a_2\ar@{.}[dr]!<2ex,0ex>&  & a_3 -_B a_4 \ar@{.}[dl]!<-2ex,0ex> \ar@{.}[dr]!<2ex,0ex>& & a_5 -_B a_6 \ar@{.}[dl]!<-2ex,0ex>  & \text{in } A \\
   & b_1 -_A b_2\ar@{.}[ul]!<-2ex,0ex> \ar@{.}[ur]!<2ex,0ex>&               & b_3 -_A b_4\ar@{.}[ul]!<-2ex,0ex> \ar@{.}[ur]!<2ex,0ex> &       & \text{in } B
}}
\]
\noindent \emph{Cosets and their bijections determine} $\lor$ \emph{and} $\land$ \emph{in this situation.} Given $a\in A$ and $b\in B$:
\begin{subequations}
\begin{align}
a\lor b &= a\lor a' \text{ and } b\lor a = a'\lor a\text{ in } A\text{ where } a'-_B a\text{ is such that } a' \geq b\,.
\eqnlabel{determine1}\\
a\land b &= b'\land b\ \text{  and } b\land a = b\land b\ \text{ in } B\text{ where } b'-_A b\ \text{ is such that } a \geq b\,.
\eqnlabel{determine2}
\end{align}
\end{subequations}
(See \cite[Lemma 1.3]{Leec93}.) This explains how $\geq$ determines $\lor$ and $\land$ in the primitive case. How this is extended to the general case where $A$ and $B$ are incomparable $\DDD$-classes is explained in \cite[{\S}3]{Leec93}; see also \cite{Leec96}.

This paper focuses on skew chains of $\mathcal{D}$-classes $A > B > C$ in a skew lattice and their three primitive subalgebras: $A>B$, $B>C$ and $A>C$. Viewing coset bijections as partial bijections between the relevant $\DDD$-classes one may ask: \emph{is the composite} $\psi\varphi$ \emph{of coset bijections} $\varphi:A\to B$ \emph{and} $\psi:B\to C$, \emph{if nonempty, a coset bijection from} $A$ \emph{to} $C$? If the answer is always yes, the skew chain is called \defn{categorical}. (Since including identity maps on $\DDD$-classes and empty partial bijections if needed creates a category with $\DDD$-classes for objects, coset bijections for morphisms and composition being that of partial bijections.) If this occurs for all skew chains in a skew lattice $\mathbf{S}$, then $\mathbf{S}$ is categorical. If such compositions are also always nonempty, the skew chain [skew lattice] is \defn{strictly categorical}.

Both categorical and strictly categorical skew lattices form varieties. (See \cite[Theorem 3.16]{Leec93} and Corollary 4.3 below.) We will see that distributive skew lattices are categorical, and in particular skew lattices in rings are categorical. All skew Boolean algebras \cite{Leec90} are strictly categorical. Categorical skew lattices were introduced in \cite{Leec93}. Here we take an alternatively approach.

In all this, individual ordered pairs $a>b$ are bundled to form coset bijections. We first look at how this ``bundling'' process (parallelism) extends from the $A-B$ and $B-C$ settings to the $A-C$ settings in the next section.

\section{Parallel ordered pairs}
\seclabel{parallel}

Suppose $A > B$ is a (primitive) skew chain and $\varphi : X\to Y$ is a fixed coset bijection where $X$ is a $B$-coset in $A$ and $Y$ is an $A$-coset in $B$. Viewing the function $\varphi$ as a binary relation, let us momentarily identify it with the set of strictly ordered pairs $a > b$ where $a\in X$, $b\in Y$ are such that $\varphi(a) = b$. Suppose $a > b$ and $a' > b'$ are two such pairs. Since $b' = \varphi(a') = a'\land y\land a'$ for all $y\in Y$, we certainly have $b' = a'\land b\land a$ and similarly $b = a\land b'\land a$. Since $a' = \varphi^{-1}(b') = b'\lor x\lor b'$ for all $x\in X$, we have $a' = b'\lor a\lor b'$ and similarly $a = b\lor a'\lor b$. These observations motivate the following definition.

Strictly ordered pairs $a > b$ and $a' > b'$ in a skew lattice $\mathbf{S}$ are said to be \defn{parallel}, denoted $a > b\mypara a' > b'$, if $a\greenD a'$, $b\greenD b'$, $a' = b'\lor a\lor b'$ and $b' = a'\land b\land a'$. In this case, \eqnref{regular1} and \eqnref{regular2} imply that $a = b\lor a'\lor b$ and $b = a\land b'\land a$ also, so that the concept is symmetric with respect to both inequalities. In fact, the two pairs are parallel precisely when both lie in a common coset bijection $\varphi$, when considered to be a binary relation. Indeed, $a > b\mypara a' > b'$ implies that both $a$ and $a'$ share a common $\mathcal{D}_b$-coset in $\mathcal{D}_a$, and $b$ and $b'$ share a common $\mathcal{D}_a$-coset in $\mathcal{D}_b$, making both pairs belong to a common $\varphi$. Conversely, if $a > b$ and $a' > b'$ lie in a common coset bijections so that $a,a'$ share a $\mathcal{D}_b$-coset in $\mathcal{D}_a$ and $b,b'$ share a $\mathcal{D}_a$-coset in $\mathcal{D}_b$, then $a'=b'\lor a\lor b'$ and $b'=a'\land b\land a'$ must follow so that $a > b\mypara a'> b'$. Thus:

\begin{proposition}
\prplabel{2.1}
Parallelism is an equivalence relation on the set of all partially ordered pairs $a > b$ in a skew lattice $\mathbf{S}$, the equivalence classes of which form coset bijections when the latter are viewed as binary relations. Moreover:
\begin{enumerate}[label=\emph{\roman*)}]
\item If $a > b\mypara a' > b'$, then $a = a'$ if and only if $ b = b'$;
\item If $a > b\mypara a' > b'$ and $b > c\mypara b' > c'$, then $a > c\mypara a' > c'$;
\item Given just $a\succ b$, then $a > a\land b\land a\mypara b\lor a\lor b > b$.
\end{enumerate}
\end{proposition}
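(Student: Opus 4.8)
The proposition has three parts, and the plan is to prove each directly from the definition of parallelism together with the regularity identities and the absorption/duality laws available in the excerpt.

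For the first part, I would argue as follows. Suppose $a > b \mypara a' > b'$, so in particular $b' = a' \land b \land a'$ and $b = a \land b' \land a$. If $a = a'$, then substituting into these two equations gives $b' = a \land b \land a$ and $b = a \land b' \land a$; since $a \geq b$ means $a \land b = b = b \land a$, the expression $a \land b \land a$ collapses to $b$, so $b' = b$. The converse is symmetric: the excerpt already notes that parallelism is symmetric in both inequalities, so $b = b'$ forces $a = a'$ by the dual computation using $a = b \lor a' \lor b$ and $a' = b' \lor a \lor b'$. The only point requiring care is justifying the collapse $a \land b \land a = b$ from $a > b$, which is immediate from \eqnref{order}.

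For the transitivity-type statement in the second part, I would start from the hypotheses $a' = b' \lor a \lor b'$, $b' = a' \land b \land a'$, $c' = b' \land c \land b'$, and $b' = c' \lor b \lor c'$, and aim to prove $a' = c' \lor a \lor c'$ and $c' = a' \land c \land a'$ (the $\DDD$-equivalences $a \greenD a'$ and $c \greenD c'$ being inherited from the hypotheses, since $\DDD$ is a congruence). The key computation is to substitute the expression for $b'$ into the formula for $a'$ and then rewrite using regularity. For instance, to get $a' = c' \lor a \lor c'$ I would expand $a' = b' \lor a \lor b' = (c' \lor b \lor c') \lor a \lor (c' \lor b \lor c')$ and then use \eqnref{regular1} to absorb the occurrences of $b$: since $b \succeq c$ and hence... here I must instead use that $c' \preceq b'$, so \eqnref{regular1} lets me delete the inner $b$'s that are flanked by $c'$'s. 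Dually, the meet identity $c' = a' \land c \land a'$ follows by substituting $a' = b' \lor a \lor b'$ into $c' = b' \land c \land b'$ and applying \eqnref{regular2}, using $a' \succeq b'$ to absorb the $a$'s. This is the step I expect to be the main obstacle: getting the bracketing and the direction of the inequalities exactly right so that the regularity identities \eqnref{regular1} and \eqnref{regular2} apply, since one must track which elements are $\preceq$ or $\succeq$ which others across the chain $a \succ b \succ c$.

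For the third part, given $a \succ b$, I would simply set $a' := b \lor a \lor b$ and $b' := a \land b \land a$ and verify directly that $a > a \land b \land a$ and $b \lor a \lor b > b$ are genuine partial-order relations, and that the four defining conditions for $a > b' \mypara a' > b$ hold. That $a \geq a \land b \land a$ and $b \lor a \lor b \geq b$ follow from absorption \eqnref{absorb}; that these elements are $\DDD$-equivalent to $a$ and $b$ respectively follows from the $\DDD$-description in terms of $\lor$ and $\land$ given in the excerpt. The parallelism conditions $a' = b' \lor a \lor b'$ and $b' = a' \land b \land a'$ then reduce, after substituting the definitions of $a'$ and $b'$, to regularity identities of the same flavor as in part (ii). I would present this as the most mechanical of the three, serving as a concrete instance confirming that every strict relation $a \succ b$ sits inside a coset bijection.
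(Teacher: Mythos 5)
Your overall strategy is genuinely different from the paper's. The paper disposes of Proposition \prpref{2.1} in one line by appealing to the fact, established in the paragraph preceding the statement, that two strictly ordered pairs are parallel precisely when they lie in a common coset bijection: (i) is then just the statement that a coset bijection is a bijection, (ii) comes from composing coset bijections, and (iii) from the explicit formulas $\varphi(a)=a\land y\land a$, $\varphi^{-1}(b)=b\lor x\lor b$. Your plan is instead a direct equational verification from the four defining identities of parallelism using regularity, which is more self-contained and a perfectly legitimate route; parts (i) and (iii) go through essentially as you describe (for (iii) you should also note that the two new order relations are strict, which follows from $a\succ b$, i.e.\ $a\not\mathrel{\DDD}b$).

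The difficulty sits in part (ii), exactly at the step you flag. Substituting $b'=c'\lor b\lor c'$ into $a'=b'\lor a\lor b'$ gives $a'=c'\lor b\lor c'\lor a\lor c'\lor b\lor c'$, and you propose to ``delete the inner $b$'s that are flanked by $c'$'s'' via \eqnref{regular1}. That step fails as stated: \eqnref{regular1} only removes a middle factor that is $\preceq$ factors on both sides of it, and here $b\succeq c'$ rather than $b\preceq c'$, so the $b$'s cannot be removed this way. The correct order of operations is the reverse: first delete the two \emph{inner occurrences of} $c'$ (each is $\preceq$ its neighbouring $b$ and $\preceq a$), reducing the expression to $c'\lor b\lor a\lor b\lor c'$; then observe that $b\lor a\lor b=a$ because $a>b$ is a relation in the \emph{partial} order \eqnref{order}, so $a\lor b=a=b\lor a$. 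This yields $a'=c'\lor a\lor c'$, and the dual computation via \eqnref{regular2} (delete the inner $a'$'s, which are $\succeq b$ and $\succeq c$, then use $b\land c\land b=c$ from $b>c$) yields $c'=a'\land c\land a'$. The collapse $b\lor a\lor b=a$ is the missing ingredient in your sketch; regularity alone does not eliminate the $b$'s. With that correction your argument is complete and constitutes a valid, more elementary alternative to the paper's appeal to the coset-bijection machinery.
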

\begin{proof}
The first claim is routine, and (i)-(iii) follow from basic properties of coset bijections: their being bijections indeed, their composition and their connections to their particular cosets of relevance.
\end{proof}

Now we return to the point of view that for a skew chain $A > B$, a coset bijection $\varphi : X\to Y$, $X\subseteq A$, $Y\subseteq B$, is a partial bijection $\varphi : A\to B$ of the $\mathcal{D}$-classes. Let $A > B > C$ be a $3$-term skew chain and suppose $\varphi : A\to B$ and $\psi : B\to C$ are coset (partial) bijections. Suppose that the composite partial bijection $\psi\circ \varphi : A\to C$ is nonempty, say $a > b > c$ with $b =\varphi(a)$ and $c =\psi(b)$. Then there is a uniquely determined partial bijection $\chi : A \to C$ defined on its coset domain by $\chi(u) = u\land c\land u$ such that $\psi\circ \varphi \subseteq \chi$. Later we shall see instances where the inclusion is proper. We are interested in characterizing equality.

In terms of parallelism and the fixed triple $a > b > c$, the situation we have described so far is that \emph{if} $a > b\mypara a'> b'$ and $b > c\mypara b' > c'$, \emph{then} $a > c\mypara a'> c'$. We see that $\chi = \psi \circ \varphi$ precisely when the converse holds, that is, \emph{if} $a > c\mypara a'> c'$, \emph{then} there exists a (necessarily) unique $b'\in B$ such that $a > b\mypara a' > b'$ and $b > c\mypara b' > c'$. In particular, $b'$ must equal both $a'\land b\land a'$ and $c'\lor b\lor c'$. This gives the following Hasse configuration of parallel pairs.
\begin{equation}
\eqnlabel{parallel}
\begin{array}{lcc}
a & \text{--}\text{--} & a' = c'\lor a\lor c' = b'\lor a\lor b' \\
\vdots & & \vdots \\
b & \text{--}\text{--} & b' = c'\lor b\lor c' = a'\land b\land a'\\
\vdots & & \vdots \\
c & \text{--}\text{--} & c' = a'\land c\land a' = b'\land c\land b'
\end{array}
\end{equation}

Now considering this for all possible coset bijections in a skew lattice, we obtain the following characterization.

\begin{proposition}
\prplabel{orderfactorable}
A skew lattice $\mathbf{S}$ is categorical if and only if, given $a > b > c$ with $a > c\mypara a' > c'$, there exists a unique $b'\in S$ such that $a > b\mypara a' > b'$ and $b > c\mypara b' > c'$.
\end{proposition}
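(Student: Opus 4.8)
The plan is to translate both ``categorical'' and the stated parallelism condition into statements about sets of ordered pairs, using the identification from Proposition~\prpref{2.1} of coset bijections (viewed as binary relations) with parallelism classes. Fix a $3$-term skew chain $A > B > C$ together with coset bijections $\varphi : A\to B$ and $\psi : B\to C$ whose composite $\psi\circ\varphi$ is nonempty, and choose a witnessing triple $a > b > c$ with $b=\varphi(a)$, $c=\psi(b)$. Then $\varphi$ is the parallelism class of $a > b$, $\psi$ is the parallelism class of $b > c$, and the coset bijection $\chi : A\to C$ containing $(a,c)$ is the parallelism class of $a > c$. First I would record the two resulting descriptions: $(a',c')\in\chi$ exactly when $a > c\mypara a' > c'$, while $(a',c')\in\psi\circ\varphi$ exactly when some $b'$ satisfies $a > b\mypara a' > b'$ and $b > c\mypara b' > c'$.

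With these in hand, the always-valid inclusion $\psi\circ\varphi\subseteq\chi$ is immediate from Proposition~\prpref{2.1}(ii): any $(a',c')\in\psi\circ\varphi$ comes with an intermediate $b'$, and the two parallelism relations compose to give $a > c\mypara a' > c'$. Categoricity of this chain (for these two bijections) then amounts exactly to the reverse inclusion $\chi\subseteq\psi\circ\varphi$, because $\psi\circ\varphi$ is a coset bijection precisely when it fills out the whole parallelism class $\chi$ that already contains $(a,c)$. Thus the chain $A > B > C$ is categorical iff for every $(a',c')\in\chi$ there is a $b'$ with $a > b\mypara a' > b'$ and $b > c\mypara b' > c'$. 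Quantifying over all triples $a > b > c$ and all $a' > c'$ parallel to $a > c$ --- equivalently, over all $3$-term skew chains and all pairs of coset bijections between their $\mathcal{D}$-classes --- turns the chain-by-chain equivalence into the global statement about $\mathbf{S}$.

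Both directions are then short. For ``categorical $\Rightarrow$ condition'', given $a > b > c$ with $a > c\mypara a' > c'$, I take $\varphi,\psi$ to be the parallelism classes of $a > b$ and $b > c$; their composite is nonempty (it contains $(a,c)$) and hence, by hypothesis, is a coset bijection, necessarily $\chi$, so $(a',c')\in\chi=\psi\circ\varphi$ supplies the required $b'$. For the converse I run the same dictionary backwards: given any nonempty composite and any $(a',c')\in\chi$, the condition produces the intermediate $b'$ placing $(a',c')$ in $\psi\circ\varphi$, which forces $\psi\circ\varphi=\chi$ and so makes the composite a coset bijection.

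Finally, uniqueness of $b'$ is not really part of the ``categorical'' content but a free consequence of Proposition~\prpref{2.1}: if $a > b\mypara a' > b'_1$ and $a > b\mypara a' > b'_2$, then $a' > b'_1\mypara a' > b'_2$ by transitivity, and part (i) (equal first coordinates force equal second coordinates) gives $b'_1=b'_2$. The only genuinely delicate point --- and the step I would be most careful about --- is the bookkeeping that identifies $\varphi$, $\psi$, and $\chi$ with the correct parallelism classes and confirms that any coset bijection containing a single pair of $\psi\circ\varphi$ must be $\chi$. Once that dictionary is pinned down, the argument is purely a chase through the definitions.
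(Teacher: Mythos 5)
Your proposal is correct and follows essentially the same route as the paper: the paper's own justification (given in the discussion immediately preceding the proposition rather than in a formal proof environment) is precisely the dictionary identifying $\varphi$, $\psi$, and the enveloping $\chi$ with parallelism classes, observing that $\psi\circ\varphi\subseteq\chi$ always holds, and characterizing equality by the existence of the intermediate $b'$. Your extra care about uniqueness via Proposition~\prpref{2.1}(i) and about why a coset bijection containing one pair of $\psi\circ\varphi$ must be $\chi$ is exactly the right bookkeeping and matches the paper's intent.
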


\begin{theorem}
\thmlabel{variety}
For a skew lattice $\mathbf{S}$, the following are equivalent.
\begin{enumerate}[label=\emph{\roman*)}]
\item $\mathbf{S}$ is categorical;
\item For all $x,y,z\in S$,
\begin{equation}
\eqnlabel{catimp1}
x\geq y\succeq z\quad\Rightarrow\quad
x\land (z\lor y\lor z)\land x = (x\land z\land x)\lor y\lor (x\land z\land x)\,;
\end{equation}
\item For all $x,y,z\in S$,
\begin{equation}
\eqnlabel{catimp2}
x\succeq y\geq z\quad\Rightarrow\quad
z\lor (x\land y\land x)\lor z = (z\lor x\lor z)\land y\land (z\lor x\lor z)\,.
\end{equation}
\end{enumerate}
\end{theorem}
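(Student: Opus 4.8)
The plan is to work entirely through Proposition~\prpref{orderfactorable}, which says that $\mathbf{S}$ is categorical exactly when, for every $3$-chain $a>b>c$ and every parallel pair $a>c\mypara a'>c'$, the elements $a'\land b\land a'$ and $c'\lor b\lor c'$ coincide (and then the two parallelisms $a>b\mypara a'>b'$ and $b>c\mypara b'>c'$ hold for the common value $b'$; this last verification is a routine application of \eqnref{regular1}, \eqnref{regular2} and the parallel relations \eqnref{parallel}). Thus the whole theorem reduces to matching the single coset equation $a'\land b\land a'=c'\lor b\lor c'$ against the implications \eqnref{catimp1} and \eqnref{catimp2}. Since \eqnref{catimp2} is exactly the image of \eqnref{catimp1} under the dual that interchanges $\lor$ and $\land$ (and hence reverses both $\geq$ and $\succeq$), and since being categorical is self-dual, it suffices to prove (i)$\Leftrightarrow$(ii); statement (iii) then follows by applying (i)$\Leftrightarrow$(ii) to the dual skew lattice.

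For (i)$\Rightarrow$(ii) I would start from $x\geq y\succeq z$ and manufacture a $3$-chain by reflecting through $z$. Using Proposition~\prpref{2.1}(iii) on $x\succeq z$ and on $y\succeq z$, set $a=z\lor x\lor z$, $b=z\lor y\lor z$, $c=z$, $a'=x$ and $c'=x\land z\land x$. One checks that $a>b$ (because $x\geq y$ forces $z\lor x\lor z\geq z\lor y\lor z$ via \eqnref{regular1}), that $b>c$ and $a>c$, and that $a>c\mypara a'>c'$ is precisely the parallel pair coming from $x\succeq z$. Categoricity then gives $a'\land b\land a'=c'\lor b\lor c'$, whose left side is literally the left side of \eqnref{catimp1}; the right side $(x\land z\land x)\lor(z\lor y\lor z)\lor(x\land z\land x)$ collapses to $(x\land z\land x)\lor y\lor(x\land z\land x)$ by deleting the inner copies of $z$, which are $\preceq x\land z\land x$, via \eqnref{regular1}. (Where two of $\DDD_x,\DDD_y,\DDD_z$ coincide the chain degenerates and must be treated separately, but there \eqnref{catimp1} reduces to an identity provable directly from regularity.)

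For (ii)$\Rightarrow$(i) I would fix a chain $a>b>c$, with $\DDD$-classes $A>B>C$, and a parallel pair $a>c\mypara a'>c'$, and aim to prove $b_1:=a'\land b\land a'$ equals $b_2:=c'\lor b\lor c'$. The obvious move --- feeding $(a',b,c)$ into \eqnref{catimp1} --- is blocked, and this is the crux of the whole argument: the hypothesis of \eqnref{catimp1} needs a genuine partial order $x\geq y$ on top, whereas $a'$, being only $\greenD$-related to $a$, generally does not dominate $b$. The device is to apply \eqnref{catimp1} at the \emph{unprimed} top, where a genuine order is available, and then transport the conclusion to the primed side through cosets. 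Concretely, apply \eqnref{catimp1} to $(a,b,c')$, which is legitimate since $a\geq b\succeq c'$; because $c,c'$ lie in a common $A$-coset in $C$ and $a\geq c$, one has $a\land c'\land a=c$, so the conclusion simplifies to $a\land b_2\land a=b$. This says $b$ and $b_2$ share an $A$-coset in $B$, whence reflecting down through $a'$ (a map constant on $A$-cosets of $B$) gives $a'\land b_2\land a'=a'\land b\land a'=b_1$.

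It then remains to remove the sandwich, i.e.\ to see $a'\land b_2\land a'=b_2$, for which I would show $a'\geq b_2$. Here the parallel relations \eqnref{parallel} pay off: $a'=c'\lor a\lor c'$ and $b_2=c'\lor b\lor c'$, and $a\geq b$ propagates through the reflection (delete the inner $c'$ in $c'\lor a\lor c'\lor b\lor c'$ by \eqnref{regular1} and use $a\lor b=a=b\lor a$) to yield $a'\geq b_2$. Hence $b_1=a'\land b_2\land a'=b_2$, and Proposition~\prpref{orderfactorable} gives that $\mathbf{S}$ is categorical. I expect the genuine difficulty to lie entirely in this direction --- specifically the premise mismatch forcing the detour through the unprimed top and the coset-transport step --- the forward direction and the duality reduction being comparatively mechanical.
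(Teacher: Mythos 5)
Your proposal is correct and follows essentially the same route as the paper: for (i)$\Rightarrow$(ii) it builds the very same chain $z\lor x\lor z > z\lor y\lor z > z$ with the parallel pair from Proposition \prpref{2.1}(iii) and collapses the right side by \eqnref{regular1}, and for (ii)$\Rightarrow$(i) it applies \eqnref{catimp1} to $(a,b,c')$ and then conjugates by $a'$, with (iii) obtained by duality exactly as in the paper. The only (cosmetic) divergence is the last simplification $a'\land(c'\lor b\lor c')\land a' = c'\lor b\lor c'$, which you get by first showing $a'\geq c'\lor b\lor c'$ while the paper does a direct regularity-and-absorption computation.
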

\begin{proof}
Assume (i) holds and let $a\geq b\succeq c$ be given. If $a = b$ or if $b\greenD c$, then their insertion into \eqnref{catimp1} produces a trivial identity. Thus we may assume the comparisons to be strict: $a > b\succ c$. Proposition \prpref{2.1}(iii) gives $a > a\land c\land a\mypara c\lor a\lor c > c$. Since $c\lor a\lor c> c\lor b\lor c > c$, \eqnref{parallel} gives
\[
a\land (c\lor b\lor c\land a = (a\land c\land a)\lor c\lor b\lor c\lor (a\land c\land a)\,.
\]
From $c\greenD a\land c\land a$, \eqnref{regular1} reduces the right side to $(a\land c\land a)\lor b\lor (a\land c\land a)$ and so \eqnref{catimp1} holds. We have established (i)$\Rightarrow$(ii).

Conversely assume that (ii) holds, and let both $a > c\mypara a' > c'$ and $a > b > c$. Since $b > c\greenD c'$, $b\succ c'$. Thus $a > b\succ c'$, and so by \eqnref{catimp1},
\[
a\land (c'\lor b\lor c')\land a = (a\land c'\land a)\lor v\lor (a\land c'\land a) = c\lor b\lor c = b\,,
\]
since $a > b$ and $a\land c'\land a = c$. Taking two-sided meets with $a'$ gives
\begin{align*}
a'\land b\land a' &= a'\land a\land (c'\lor b\lor c')\land a\land a' && \\
&= a'\land a\land a'\land (c'\lor b\lor c')\land a'\land a\land a' && \text{(by regularity)} \\
&= a'\land (c'\lor b\lor c')\land a' &&
\text{(since }a\greenD a'\text{)} \\
&= (c'\lor a\lor c')\land (c'\lor b\lor c')\land (c'\lor a\lor c') &&
\text{(since }a' > c'\text{)} \\
&= (c'\lor a\lor b\lor c')\land (c'\lor b\lor c')\land (c'\lor b\lor a\lor c') && \text{(by }\eqnref{regular1}\text{)} \\
&= (c'\lor a\lor c'\lor b\lor c')\land (c'\lor b\lor c')\land (c'\lor b\lor c'\lor a\lor c') && \text{(by }\eqnref{regular1}\text{)} \\
&= c'\lor b\lor c && \text{(by }\eqnref{absorb}\text{)}\,.
\end{align*}
Thus \eqnref{parallel} holds and $\mathbf{S}$ is categorical.

We have established (i)$\Leftrightarrow$(ii). The proof of (i)$\Leftrightarrow$(iii) is dual to this, exchanging $\land$ and $\lor$ as needed.
\end{proof}

Next we will show that categorical skew lattices form a variety by giving characterizing identities. This was already done in \cite[Theorem 3.16]{Leec93}, but the identity given there is rather long. Here we give two new ones, the first being the shortest we know and the second exhibiting a certain amount of symmetry in the variables. First we recall more basic notions.

A skew lattice is \defn{right-handed} [respectively, \defn{left-handed}] if it satisfies the identities
\begin{subequations}
\begin{align}
x\land y\land x = y\land x\quad\text{and}\quad x\lor y\lor x = x\lor y\,. \eqnlabel{rh} \\
[x\land y\land x = x\land y\quad\text{and}\quad x\lor y\lor x = y\lor x]\,. \eqnlabel{lh}
\end{align}
\end{subequations}
Equivalently, $x\land y=y$ and $x\lor y=x$ [$x\land y=x$ and $x\lor y=y$] hold in each $\mathcal{D}$-class, thus reducing $\mathcal{D}$ to $\mathcal{R}$ [or $\mathcal{L}$]. Useful right- and left-handed variants of \eqnref{rh} and \eqnref{lh} are
\begin{subequations}
\begin{align}
x\succeq x'\quad\Rightarrow\quad x\land y\land x' = y\land x'\quad\text{and}\quad x\lor y\lor x' = x\lor y\,; \eqnlabel{rh-var} \\
x\succeq x'\quad\Rightarrow\quad x'\land y\land x = x'\land y\quad\text{and}\quad x'\lor y\lor x = y\lor x'\,; \eqnlabel{lh-var}
\end{align}
\end{subequations}
The \emph{Second Decomposition Theorem} \cite[Theorem 1.15]{Leec89} states that \emph{given any skew lattice} $\mathbf{S}$, $\mathbf{S}/\mathcal{R}$ \emph{and} $\mathbf{S}/\mathcal{L}$ \emph{are its respective maximal left- and right-handed images, and} $\mathbf{S}$ \emph{is isomorphic to their fibred product (pullback)} $\mathbf{S}/\mathcal{R}\times_{\mathbf{S}/\mathcal{D}} \mathbf{S}/\mathcal{L}$ \emph{over their maximal lattice image under the map} $x\mapsto (\mathcal{R}_x, \mathcal{L}_x)$. Thus a skew lattice $\mathbf{S}$ \emph{belongs to a variety} $\mathcal{V}$ \emph{of skew lattices if and only if both} $\mathbf{S}/\mathcal{R}$ \emph{and} $\mathbf{S}/\mathcal{L}$ \emph{do}. (See also \cite{Cvet07,Leec96}.)

\begin{theorem}
\thmlabel{identities}
Let $\mathbf{S}$ be a skew lattice. The following are equivalent.
\begin{enumerate}[label=\emph{\roman*)}]
\item $\mathbf{S}$ is categorical.
\item For all $x,y,z\in S$,
\begin{equation}
\eqnlabel{catshort}
x\land [(x\land y\land z\land y\land x)\lor y\lor (x\land y\land z\land y\land x)]\land x = x\land y\land x\,.
\end{equation}
\item For all $x,y,z\in S$,
\begin{equation}
\eqnlabel{catsymm}
x\land [(x\land z\land x)\lor y\lor (x\land z\land x)]\land x =
x\land [(z\land x\land z)\lor y\lor (z\land x\land z)]\land x\,.
\end{equation}
\end{enumerate}
\end{theorem}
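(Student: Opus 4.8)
The plan is to use \thmref{variety} as a bridge and then pass to the handed factors. By \thmref{variety}, $\mathbf{S}$ is categorical exactly when the implication \eqnref{catimp1} holds (equivalently \eqnref{catimp2}); and, as recalled in the introduction, ``categorical'' is itself a variety. Since \eqnref{catshort} and \eqnref{catsymm} are identities, all of these conditions are preserved by homomorphic images and reflected by the subdirect embedding $\mathbf{S}\hookrightarrow \mathbf{S}/\RRR\times \mathbf{S}/\LLL$ of the Second Decomposition Theorem; hence each holds in $\mathbf{S}$ if and only if it holds in both the left-handed image $\mathbf{S}/\RRR$ and the right-handed image $\mathbf{S}/\LLL$. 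It therefore suffices to prove the three conditions equivalent in a right-handed skew lattice and to invoke left--right duality for the other factor.

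In a right-handed skew lattice \eqnref{rh} and \eqnref{rh-var} collapse every word to a regular-band normal form, so I would first record the reductions $x\land y\land x = y\land x$, $x\lor y\lor x = x\lor y$ and $x\land y\land z\land y\land x = z\land y\land x$. With these, \eqnref{catshort} becomes the plain identity $((z\land y\land x)\lor y)\land x = y\land x$, the symmetric identity \eqnref{catsymm} becomes $((z\land x)\lor y)\land x = ((x\land z)\lor y)\land x$, and the categorical criterion \eqnref{catimp1} becomes the implication $x\ge y\succeq z\Rightarrow (z\lor y)\land x = (z\land x)\lor y$. The theorem, in the right-handed case, is the assertion that each of the two plain identities is equivalent to this implication.

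For each identity I would argue the two directions separately. The forward direction (criterion $\Rightarrow$ identity) is the substitution step: one applies the conditional \eqnref{catimp1} to a triple manufactured from arbitrary $x,y,z$ so that the order premise $x\ge y\succeq z$ holds automatically. A naive choice trivializes the premise; for instance, replacing the middle variable by $x\land y\land x$, which already lies below $x$, forces the whole configuration below $x$ and reduces \eqnref{catimp1} to a content-free identity. The device I would use instead is to supply a genuine upper bound by ascending to the join $\DDD$-class: the element $y\lor x\lor y$ lies above $y$, so one may apply the criterion with it as pivot and then descend by meeting with $x$, simplifying the result by \eqnref{regular1}, \eqnref{regular2} and \eqnref{absorb} back to the target word. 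The reverse direction (identity $\Rightarrow$ criterion) specializes the unconditional identity to the given ordered triple and, again through regular-band bookkeeping, extracts the conditional equation. The equivalence of \eqnref{catsymm} with \eqnref{catshort} I would handle by the substitution $z\mapsto z\land y$ together with the same reductions, under which the two sides of \eqnref{catsymm} are carried to the two sides of \eqnref{catshort}.

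The main obstacle is this hypothesis elimination: because \eqnref{catimp1} carries the strong premise $x\ge y\succeq z$, turning it into an unconditional identity without collapsing it requires the auxiliary upper bound and a careful verification, via the regularity laws, that meeting back down by $x$ recovers the intended word rather than a degenerate one. Conceptually, what these computations express is exactly the parallelism picture \eqnref{parallel} and the coset-composition condition of Proposition \prpref{orderfactorable}: each side of \eqnref{catshort} and \eqnref{catsymm} records a composite of coset bijections among the classes $\DDD_x$, $\DDD_x\land\DDD_y$ and $\DDD_x\land\DDD_y\land\DDD_z$, and the asserted equalities say precisely that these composites agree. The secondary obstacle is the left--right swap in \eqnref{catsymm}: showing that replacing $x\land z\land x$ by $z\land x\land z$ under the sandwich $x\land(\,\cdot\,)\land x$ is harmless exactly when $\mathbf{S}$ is categorical, which turns on the fact that $x\land z\land x$ and $z\land x\land z$ share the $\DDD$-class $\DDD_x\land\DDD_z$ but generally lie in different cosets of it.
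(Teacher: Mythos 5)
Your proposal is correct and follows essentially the same route as the paper: reduce to the one\nobreakdash-handed case via the Second Decomposition Theorem, rewrite \eqnref{catshort}, \eqnref{catsymm} and \eqnref{catimp1} in handed normal form, obtain the identities from the conditional criterion by using $y\lor x\lor y$ (i.e.\ $x\lor y$ in the paper's left-handed setting) as the auxiliary upper bound and then meeting back down with $x$, and recover the criterion by specializing the identities to an ordered triple. The only cosmetic differences are that the paper works left-handed rather than right-handed and links \eqnref{catshort} to \eqnref{catsymm} by the substitutions $y\mapsto y\lor z$, $z\mapsto z\land x$ instead of your $z\mapsto z\land y$; neither affects correctness.
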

\begin{proof}
Assume first that $\mathbf{S}$ is a left-handed categorical skew lattice. Suppose (i) holds. By Theorem \thmref{variety}, $\mathbf{S}$ satisfies the left-handed version of \eqnref{catimp1}:
\begin{equation}
\eqnlabel{catimp-lh}
x\geq y\succeq z\quad\Rightarrow\quad
x\land (y\lor z) = y\lor (x\land z)\,.
\end{equation}
Note that $x\lor y\geq y\succeq (y\lor x)\land y\land z$. We may thus apply \eqnref{catimp-lh}. The right side becomes
\[
y\lor [(x\lor y)\land (y\lor x)\land y\land z] =
y\lor [(x\lor y)\land y\land z] = y\lor [y\land z] = y\,,
\]
using left-handedness and absorption. Therefore the identity
\begin{equation}
\eqnlabel{cat1}
(x\lor y)\land [y\lor ((y\lor x)\land y\land z)] = y
\end{equation}
holds. Taking the meet of both sides on the left with $x$, we get
\begin{equation}
\eqnlabel{cat2}
x\land [y\lor ((y\lor x)\land y\land z)] = x\land y\,.
\end{equation}
Now replace $y$ with $y\land x$. The left side of \eqnref{cat2} becomes
\[
x\land [(y\land x)\lor (((y\land x)\lor x)\land y\land z)] = x\land [(y\land x)\lor (x\land y\land z)]\,,
\]
and the right side becomes $x\land y\land x = x\land y$. Thus we have the identity
\begin{equation}
\eqnlabel{cat3}
x\land [(y\land x)\lor (x\land y\land z)] = x\land y\,.
\end{equation}

Now meet both sides of \eqnref{cat3} on the left with $x\land (y\lor (x\land y\land z))$. On the right side, we get
\[
x\land (y\lor (x\land y\land z))\land x\land y = x\land (y\lor (x\land y\land z))\land y = x\land (y\lor (x\land y\land z))\,,
\]
since $y = y\lor y \succeq y\lor (x\land y\land z)$. The left side becomes
\begin{align*}
& x\land [y\lor (x\land y\land z)]\land x\land [(y\land x)\lor (x\land y\land z)] \\
&= x\land [y\lor (y\land x)\lor (x\land y\land z)]\land [(y\land x)\lor (x\land y\land z)] \\
&= x\land [(y\land x)\lor (x\land y\land z)] \\
&= x\land y\,,
\end{align*}
where the last step is an application of \eqnref{cat3}. Thus we have established
\begin{equation}
\eqnlabel{catshort-lh}
x\land (y\lor (x\land y\land z)) = x\land y\,,
\end{equation}
which is the left-handed version of \eqnref{catshort}. This proves (i)$\Rightarrow$(ii) for all left-handed skew lattices.

Continuing to assume $\mathbf{S}$ is left-handed, suppose (ii) holds. Replace $y$ with $y\lor z$ in \eqnref{catshort-lh}. On the left side, we obtain
\[
x\land (y\lor z\lor (x\land (y\lor z)\land z)) = x\land (y\lor z\lor (x\land z))\,.
\]
On the right side, we get $x\land (y\lor z)$, and so we have
\begin{equation}
\eqnlabel{cat4}
x\land (y\lor z\lor (x\land z)) = x\land (y\lor z)\,.
\end{equation}
Now in \eqnref{cat4}, replace $z$ with $z\land x$. On the left side, we get
\[
x\land (y\lor (z\land x)\lor (x\land z\land x))
= x\land (y\lor (z\land x\land z)\lor (x\land z))
= x\land (y\lor (x\land z))\,.
\]
On the right side, we get $x\land (y\lor (z\land x))$, and thus we obtain the identity
\begin{equation}
\eqnlabel{catsymm-lh}
x\land (y\lor (x\land z)) = x\land (y\lor (z\land x))\,,
\end{equation}
which is the left-handed version of \eqnref{catsymm}. This proves (ii)$\Rightarrow$(iii) in left-handed skew lattices.

Still assuming $\mathbf{S}$ is left-handed, suppose (iii) holds. Fix $a,b,c\in S$ satisfying $a\geq b\succeq c$. Then
\begin{align*}
a\land (b\lor c) &= a\land (b\lor (c\land a)) &&\text{(since } a\succeq c\text{)}\\
&= a\land (b\lor (a\land c)) &&\text{(by \eqnref{catsymm-lh})} \\
&= (a\lor (a\land c))\land (b\lor (a\land c)) && \\
&= (a\lor b\lor (a\land c))\land (b\lor (a\land c)) &&\text{(since }a\geq b\text{)}\\
&= b\lor (a\land c)\,.
\end{align*}
Thus \eqnref{catimp-lh} holds and so by Theorem \thmref{variety}, $\mathbf{S}$ is categorical. This proves (iii)$\Rightarrow$(i) for left-handed skew lattices.

In general, if $\mathbf{S}$ is a skew lattice, then conditions (i), (ii) and (iii) are equivalent for the maximal left-handed image $\mathbf{S}/\mathcal{R}$. The left-right (horizontal) dual of the whole argument implies that the same is true for $\mathbf{S}/\mathcal{L}$. It follows that (i), (ii) and (iii) are equivalent for $\mathbf{S}$ itself.
\end{proof}

\begin{corollary}
\corlabel{variety}
Categorical skew lattices form a variety.
\end{corollary}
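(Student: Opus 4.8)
The plan is to read this off directly from Theorem~\thmref{identities}. That theorem shows that a skew lattice $\mathbf{S}$ is categorical if and only if it satisfies the identity \eqnref{catshort} (equivalently \eqnref{catsymm}). Since skew lattices are themselves defined by an equational base---associativity and idempotency of $\lor$ and $\land$ together with the absorption identities \eqnref{absorb}---the categorical skew lattices are precisely those algebras $(S;\lor,\land)$ satisfying this base augmented by the single additional identity \eqnref{catshort}. A class of algebras axiomatized by a set of identities is a variety by Birkhoff's HSP theorem, so the conclusion follows at once.

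If one prefers to avoid invoking Birkhoff, I would instead verify closure under the three class operators directly. Each of products, subalgebras, and homomorphic images of skew lattices is again a skew lattice, so it suffices to check that \eqnref{catshort} is inherited in each case: an identity holds in a direct product exactly when it holds in every factor (both sides are computed componentwise); it holds in any subalgebra whenever it holds in the ambient algebra (the relevant terms are evaluated within the subalgebra); and it is preserved by any surjective homomorphism, since a homomorphism commutes with the term operations appearing on each side of \eqnref{catshort}. Hence the class is closed under $H$, $S$, and $P$, and is therefore a variety.

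I expect essentially no obstacle here: all the genuine work has already been absorbed into producing the identity \eqnref{catshort} in Theorem~\thmref{identities}. The only point worth emphasizing is that \eqnref{catshort} is a \emph{bona fide} identity---an equation between two skew-lattice terms in the free variables $x,y,z$ carrying no side conditions such as $x\geq y\succeq z$---so that the characterization really does locate the categorical skew lattices inside the equational framework. It is this feature, rather than the weaker order-theoretic characterizations of Theorem~\thmref{variety}, that makes the variety conclusion immediate.
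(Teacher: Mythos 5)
Your proof is correct and is essentially the paper's own argument: the corollary is stated immediately after Theorem \thmref{identities} precisely because that theorem replaces the conditional characterizations of Theorem \thmref{variety} with the unconditional identity \eqnref{catshort}, so the class is equationally definable and hence a variety. Your added remark that \eqnref{catshort} carries no side conditions is exactly the point the paper is relying on.
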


Of course, categorical skew lattices are also characterized by the $\lor-\land$ duals of \eqnref{catshort} and \eqnref{catsymm}.

Recall that a skew lattice is \defn{distributive} if the following dual pair of identities holds:
\begin{align}
x\land (y\lor z)\land x &= (x\land y\land x)\lor (x\land z\land x)\,,
\eqnlabel{dist1} \\
x\lor (y\land z)\lor x &= (x\lor y\lor x)\land (x\lor z\lor x)\,.
\eqnlabel{dist2}
\end{align}
Many important classes of skew lattices are distributive, in particular, skew lattices in rings and skew Boolean algebras \cite{Bign95,Bign96,Cvet05a,Cvet05,Leec89,Leec90,Leec0x,Spin06}. Since \eqnref{dist1} implies \eqnref{catimp1}, we have:

\begin{corollary}
\corlabel{dist}
Distributive skew lattices are categorical.
\end{corollary}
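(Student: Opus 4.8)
The plan is to verify the assertion made in the sentence immediately preceding the statement, namely that the distributive identity \eqnref{dist1} forces the categorical implication \eqnref{catimp1}; once that is done, Theorem \thmref{variety} (the equivalence of (i) and (ii)) delivers the conclusion at once. So I would assume that \eqnref{dist1} holds throughout $\mathbf{S}$, fix $x,y,z\in S$ with $x\geq y\succeq z$, and aim to recover the right-hand side of \eqnref{catimp1} by expanding its left-hand side $x\land(z\lor y\lor z)\land x$.

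First I would expand that expression by applying \eqnref{dist1} twice. Grouping the associative join as $(z\lor y)\lor z$ and applying \eqnref{dist1} with join-arguments $z\lor y$ and $z$ gives $x\land(z\lor y\lor z)\land x=(x\land(z\lor y)\land x)\lor(x\land z\land x)$. A second application of \eqnref{dist1} to the inner term $x\land(z\lor y)\land x$ rewrites it as $(x\land z\land x)\lor(x\land y\land x)$, so that altogether
\[
x\land(z\lor y\lor z)\land x = (x\land z\land x)\lor(x\land y\land x)\lor(x\land z\land x).
\]

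The only remaining point is to eliminate the middle factor $x\land y\land x$. Here I would use the hypothesis $x\geq y$: by the definition \eqnref{order} of the natural partial order, $x\land y=y=y\land x$, whence $x\land y\land x=y\land x=y$. Substituting this into the displayed equation yields precisely $x\land(z\lor y\lor z)\land x=(x\land z\land x)\lor y\lor(x\land z\land x)$, which is the conclusion of \eqnref{catimp1}. Thus \eqnref{dist1} implies condition (ii) of Theorem \thmref{variety}, and that theorem then gives that $\mathbf{S}$ is categorical.

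I do not expect any genuine obstacle here: the argument is a direct two-step application of distributivity followed by the order identity $x\land y\land x=y$, and it does not even need the weaker hypothesis $y\succeq z$. The one point to handle with care is that \eqnref{dist1} is being applied to the compound element $z\lor y$ treated as a single join-argument, which is legitimate since $z\lor y\in S$; no associativity or regularity beyond the skew lattice axioms already in force is required.
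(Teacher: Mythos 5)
Your proof is correct and follows exactly the route the paper intends: the paper's entire justification is the remark that \eqnref{dist1} implies \eqnref{catimp1}, after which Theorem \thmref{variety} applies, and your two applications of \eqnref{dist1} together with $x\land y\land x=y$ (from $x\geq y$) supply precisely the computation the paper leaves implicit. Your observation that the hypothesis $y\succeq z$ is not even needed is also accurate.
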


\section{Forbidden subalgebras}
\seclabel{forbidden}

Clearly what occurs in the middle class of a $3$-term skew chain $A > B > C$ is significant. Two elements $b,b'\in B$ are $\mathbf{AC}$-\defn{connected} if a finite sequence $b = b_0, b_1,\ldots, b_n = b'$ in $B$ exists such that $b_i-_A b_{i+1}$ or $b_i-_C b_{i+1}$ for all $i\leq n-1$. A maximally $AC$-connected subset of $B$ is an $\mathbf{AC}$-\defn{component} of $B$ (or just \defn{component} if the context is clear). Given a component $B'$ in the middle class $B$, a sub-skew chain is given by $A > B' > C$. Indeed, if $A_1$ and $C_1$ are $B$-cosets in $A$ and $C$ respectively, then $A_1 > B' > C_1$ is an even smaller sub-skew chain.

Furthermore, let $X$ denote an $A$-coset in $B$ (thus $X = A\land b\land A$ for any $b\in X$) and let $Y$ denote a $C$-coset in $B$ (thus $Y = C\lor b\lor C$ for any $b\in Y$). If $X\cap Y \neq \emptyset$, it is called an $\mathbf{AC}$-\defn{coset} in $B$. \emph{When} $\mathbf{S}$ \emph{is categorical}, $(X\cap Y)\lor a\lor (X\cap Y)$ \emph{is a} $C$-\emph{coset in} $A$ \emph{and dually,} $(X\cap Y)\land c\land (X\cap Y)$ \emph{is an} $A$-\emph{coset in} $C$ \emph{for all} $a\in A$, $c\in C$. \emph{Conversely, when} $\mathbf{S}$ \emph{is categorical, given a} $C$-\emph{coset} $U$ \emph{in} $A$, \emph{for all} $b\in B$, $U\land b\land U$ \emph{is an} $AC$-\emph{coset in} $B$; \emph{likewise given any} $A$-\emph{coset} $V$ \emph{in} $C$, $V\lor b\lor V$ \emph{is an} $AC$-\emph{coset in} $B$ \emph{for all} $b\in B$. \emph{In both cases we get the unique} $AC$-\emph{coset in} $B$ \emph{containing} $b$. An extended discussion of these matters occurs in \cite[{\S}2]{JPC}.

We start our characterization of categorical skew lattices in terms of forbidden subalgebras with a relevant lemma.

\begin{lemma}
\lemlabel{3.1}
Let $A > B > C$ be a left-handed skew chain with $a > c\mypara a'>c'$ where $a\neq a'\in A$ and $c\neq c'\in C$. Set $A^* = \{a,a'\}$, $B^* = \{x\in B\mid a > x > c\text{ or }a' > x > c'\}$ and $C^* = \{c,c'\}$. Then $A^* > B^* > C^*$ is a sub-skew chain. In particular,
\begin{enumerate}[label=\emph{\roman*)}]
\item $a'>x>c'$ for $x\in B^*$ implies: $a >$ both $a\land x$ and $x\lor c > c$ with $a\land x-_{A^*} x-_{C^*} x\lor c$.
\item $a>x>c$ for $x\in B^*$ implies: $a'>$ both $a'\land x$ and $x\lor c'>c'$ with $a'\land x-_{A^*} x-_{C^*} x\lor c'$.
\end{enumerate}
All $A^*$-cosets and all $C^*$-cosets in $B^*$ are of order $2$. An $A^* C^*$-component in $B^*$ is either a subset $\{b,b'\}$ that is simultaneously an $A^*$-coset and $C^*$-coset in $B^*$ or else it is a larger subset with all $A^* C^*$-cosets having size $1$ and having the alternating coset form
\[
\cdots -_{A^*} \bullet -_{C^*} \bullet -_{A^*} \bullet -_{C^*} \bullet -_{A^*} \bullet -_{C^*} \cdots
\]
Only the former case can occur if the skew chain is categorical.
\end{lemma}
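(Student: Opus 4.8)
The plan is to fix, once and for all, a small toolkit of rewrites coming from left-handedness and the parallel data, and then to let these do essentially all the computational work. Since $\mathbf{S}$ is left-handed, \eqnref{lh} gives $u\land v\land u = u\land v$ and $u\lor v\lor u = v\lor u$. Combining this with \eqnref{preorder}, whenever $p$ lies in a lower $\mathcal{D}$-class than $q$ (so $p\preceq q$) one gets the \emph{collapses} $p\land q = p$ and $p\lor q = q$ (the ``lower-then-higher'' products degenerate, while ``higher-then-lower'' ones need not). Applying \eqnref{lh} to $a>c\mypara a'>c'$ I also record the rewrites $c = a\land c'$, $c' = a'\land c$, $a = a'\lor c$, $a' = a\lor c'$. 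These reduce every assertion about parallelism in this configuration to a short substitution.

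First I would prove (i), deducing (ii) from it by the symmetry of parallelism (swap the pairs $a>c$ and $a'>c'$). Given $a'>x>c'$, I check $a\land x\in B^*$: the relation $a\ge a\land x$ is immediate from $a\land x\land a = a\land x$, and $a\land x\ge c$ follows by writing $c = a\land c'$ and using $x\land c' = c'$ to get $(a\land x)\land c = a\land x\land c' = a\land c' = c$ (with the reverse inequality the same way), strictness being automatic since $a\land x\in B\neq A$. Dually $x\lor c\in B^*$ with $c<x\lor c<a$, the bound $a\ge x\lor c$ coming from $a = a'\lor c$, the collapse $c\lor x = x$, and $a'\lor x = a'$. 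The coset relations $a\land x -_{A^*} x -_{C^*} x\lor c$ then drop out of the collapses $a'\land x = x$ and $c'\lor x = x$, which exhibit $x$ inside the $A^*$-coset $\{a\land x, a'\land x\}$ and the $C^*$-coset $\{x\lor c, x\lor c'\}$. With (i)--(ii) available, the sub-skew chain claim is a closure check in which the collapses eliminate most cases: $x\land a = x\land a' = x$, $c\land x = c$, and dually for joins, while the genuinely non-trivial meets $a\land x,\,a'\land x$ land in $B^*$ by (i)/(ii), the cross terms $x\land c$ reduce through $c = a\land c'$ into $C^*$, and for $x,y\in B^*$ one shows $x\land y$ lies between $a$ and $c$ (or $a'$ and $c'$) by the same substitutions. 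Since every element of $B^*$ sits in the single $\mathcal{D}$-class $B$, the three sets are exactly the $\mathcal{D}$-classes of the subalgebra, so $A^*>B^*>C^*$ is a skew chain.

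For the coset sizes and the component dichotomy the essential input is \emph{not} local: $c$ and $c'$ lie in a common $A$-coset $Y$ of $C$, and any element of $A$ lies above exactly one element of $Y$ (the defining property of the coset bijection). Hence if $a>x>c$, then $a'\land x = x$ would force $a'\ge x\ge c$, so $a'\ge c$ and $a'\ge c'$ with $c,c'\in Y$, forcing $c = c'$, a contradiction; thus the $A^*$-coset $\{x, a'\land x\}$ has order exactly $2$, and dually the $C^*$-coset $\{x, x\lor c'\}$ does too. So each $x\in B^*$ has exactly one $A^*$-partner and one $C^*$-partner. If these coincide for some $x$, its component closes up to the two-element set $\{x,x_1\}$, which is simultaneously an $A^*$- and a $C^*$-coset; any larger component therefore has distinct partners throughout, making every $A^*C^*$-coset a singleton and the component an alternating $-_{A^*}/-_{C^*}$ path. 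Finally, if the chain is categorical I apply Proposition \prpref{orderfactorable} to $a>x>c$ with $a>c\mypara a'>c'$: the unique mediating element $b'$ must equal both $a'\land x\land a' = a'\land x$ (its $A^*$-partner) and $c'\lor x\lor c' = x\lor c'$ (its $C^*$-partner), so the two partners coincide for every $x$ and only the two-element components survive.

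I expect the main obstacle to be the order-$2$ claim, that is, ruling out the coincidence $a\land x = a'\land x$. This is precisely the step where the local left-handed collapses are insufficient and one must invoke the \emph{global} uniqueness built into the coset bijection $A\to C$ (each element of $A$ dominating a single element of each $A$-coset in $C$). Once that is secured, the bipartite partner structure, the component dichotomy, and the categorical collapse via Proposition \prpref{orderfactorable} are all short.
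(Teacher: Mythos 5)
Your proof is correct and follows essentially the same route as the paper: direct left-handed/parallelism rewrites for the closure claims (i)--(ii), the injectivity of the coset bijection $A\to C$ (an element of $A$ dominates a unique element of the $A$-coset containing $c,c'$) to force the $A^*$- and $C^*$-cosets in $B^*$ to have order $2$, and the resulting partner structure for the component dichotomy. If anything you are slightly more explicit than the paper on the final sentence, where you derive the coincidence $a'\land x\land a' = c'\lor x\lor c'$ in the categorical case from Proposition \prpref{orderfactorable}, a step the paper leaves implicit.
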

\begin{proof}
Being left-handed, we need only check the mixed outcomes, say $a\land x$, $x\land a$, $c\lor x$ and $x\lor c$ where $a' > x > c'$ for case (i). Trivially $x\land a = x = c\lor x$. As for $a\land x$, $a\land (a\land x) = a\land x = (a\land x)\land a$, due to left-handedness, so that $a > a\land x$; likewise $c\land (a\land x) = c$, while
\[
(a\land x)\land c = a\land x\land a\land c' = a\land x\land c' = a\land c' = c
\]
by left-handedness and parallelism. Hence $a\land x > c$ also, so that $a\land x$ is in $B^*$. The dual argument gives $a > x\lor c > c$, so that $x\lor c \in B^*$ also. Similarly (ii) holds and we have a sub-skew chain.

Clearly the $A^*$-cosets in $B^*$ either all have order $1$ or all have order $2$. If they have order $1$, then $a,a' >$ all elements in $B^*$, and by transitivity, $a,a' >$ both $c,c'$, so that $a > c$ is not parallel to $a' > c'$. Thus all $A^*$-cosets in $B^*$ have order $2$ and likewise all $C^*$-cosets in $B^*$ have order $2$. In an $A^* C*$-component in $B^*$, if the first case does not occur, a situation $x-_{C^*} y-_{A^*} z$ with $x,y,z$ distinct develops. Since $A^*$-cosets and $C^*$-cosets have size $2$, it extends in an alternating coset pattern in both directions, either doing so indefinitely or eventually connecting to form a cycle of even length.
\end{proof}

A complete set of examples with $B^*$ being a single $A^* C^*$-component is as follows.

\begin{example}
\exmlabel{XY}
Consider the class of skew chains $A > B_n > C$ for $1 \leq n\leq \omega$, where
\begin{align*}
A &= \{a_1,a_2\}, C = \{c_1,c_2\} \text{ and} \\
B_n &= \{b_1,b_2,\ldots,b_{2n}\}\text{ or }\{\ldots,b_{-2},b_{-1},b_0,b_1,b_2,\ldots\}\text{ if }n = \omega\,.
\end{align*}
The partial order is given by parity: $a_1 > b_{\text{odd}} > c_1$ and $a_2 > b_{\text{even}} > c_2$. Both $A$ and $C$ are full $B$-cosets as well as full cosets of each other. $A$-cosets and $C$-cosets in $B$ are given respectively by:
\[
\{b_1,b_2\mid b_3,b_4\mid \cdots\mid b_{2n-1},b_{2n}\}\quad\text{and}\quad
\{b_{2n},b_1\mid b_2,b_3\mid \cdots\mid b_{2n-2},b_{2n-1}\}\quad\text{for }n < \omega\,.
\]
For $n > 1$, $B_n$ has the following alternating coset structure (modulo $n$ when $n$ is finite):
\[
\cdots -_{A} b_{2k-2} -_{C} b_{2k-1} -_{A} b_{2k} -_{C} b_{2k+1} -_{A} b_{2k+2} -_{C} \cdots\,.
\]
Clearly $B_n$ is a single component. We denote the left-handed skew chain thus determined by $\mathbf{X}_n$ and its right-handed dual by $\mathbf{Y}_n$ for $n\leq \omega$. Their Hasse diagrams for $n = 1,2$ are given in Figure 2.
\begin{figure}[htb]
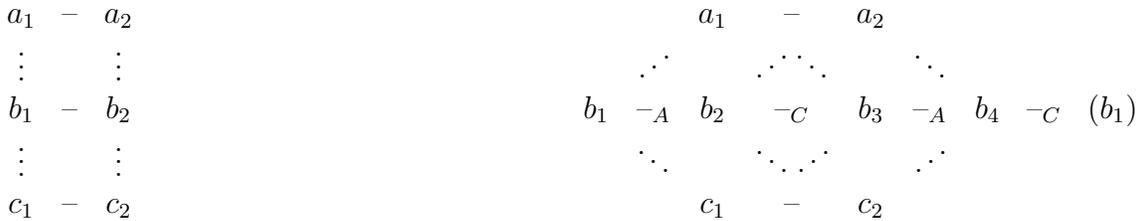

\hfill
\begin{center}
\begin{minipage}{0.4\linewidth}
\begin{tabular}{ccc}
$a_1$    & -- & $a_2$ \\
$\vdots$ &    & $\vdots$ \\
$b_1$    & -- & $b_2$ \\
$\vdots$ &    & $\vdots$ \\
$c_1$    & -- & $c_2$
\end{tabular}
\end{minipage}
\begin{minipage}{0.4\linewidth}
\begin{tabular}{lccccccccr}
      &          & $a_1$    & --     & $a_2$     &           &       &        &         \\
      & $\iddots$ &         & $\iddots \ddots$   &           & $\ddots$ &     &         \\
$b_1$ & --$_A$   & $b_2$    & --$_C$ & $b_3$     & --$_A$    & $b_4$ & --$_C$ & $(b_1)$ \\
      & $\ddots$ &          & $\ddots \iddots$   &            & $\iddots$ &   &         \\
      &          & $c_1$    & --     & $c_2$     &           &       &        &
\end{tabular}
\end{minipage}
\end{center}
\hfill
\caption{Hasse diagrams for $\mathbf{X}_n$/$\mathbf{Y}_n$, $n = 1,2$}
\end{figure}
\end{example}
Applying \eqnref{determine1} and \eqnref{determine2} above, instances of left-handed operations on $\mathbf{X}_2$ are given by
\[
a_1\lor c_2 = a_2 = a_1\lor a_2,\quad a_1\land b_4 = b_3\land b_4 = b_3,\quad\text{and}\quad b_1\lor c_2 = b_1\lor b_4 = b_4\,.
\]
Except for $\mathbf{X}_1$ and $\mathbf{Y}_1$, none of these skew lattices is categorical. In $\mathbf{X}_n$ for $n\geq 2$, $a_1 > b_1 > c_1$, $a_2\land c_1 = c_2$, $a_1\lor c_2 = a_2$, but $a_2\land b_1 = b_2$, while $b_1\lor c_2$ is either $b_{2n}$ or $b_0$. Note that while all $A$-cosets and all $C$-cosets in $B_n$ have order $2$, the $AC$-cosets have order $1$.

\begin{theorem}
\thmlabel{XY}
A left-handed skew lattice is categorical if and only if it contains no copy of $\mathbf{X}_n$ for $2\leq n\leq \omega$. Dually, a right-handed skew lattice is categorical if and only if it contains no copy of $\mathbf{Y}_n$ for $2\leq n\leq \omega$. In general, a skew lattice is categorical if and only if it contains no copy of any of these algebras. Finally, none of these algebras is a subalgebra of another one.
\end{theorem}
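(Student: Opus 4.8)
The plan is to prove the four assertions in turn, relying on Lemma \lemref{3.1}, Example \exmref{XY}, and the fact that categorical skew lattices form a variety (Corollary \corref{variety}), so that categoricity is inherited by subalgebras. For the left-handed equivalence I would argue each direction by contraposition. If $\mathbf{S}$ is left-handed and not categorical, Proposition \prpref{orderfactorable} supplies a triple $a > b > c$ with $a > c\mypara a'>c'$ admitting no mediating $b'$; Proposition \prpref{2.1}(i) forces $a\neq a'$ and $c\neq c'$, since otherwise the two parallel pairs coincide and $b'=b$ mediates. Thus Lemma \lemref{3.1} applies and yields the sub-skew chain with $A^*=\{a,a'\}$, $C^*=\{c,c'\}$. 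The $AC$-component $B^{**}$ of $b$ cannot be a two-element coset — that configuration is exactly what produces a mediator — so by the last sentence of Lemma \lemref{3.1} it has the alternating form, and $A^*>B^{**}>C^*$ is a subalgebra isomorphic to $\mathbf{X}_n$ for some $2\le n\le\omega$. (Closure is clauses (i)/(ii) of Lemma \lemref{3.1}: meets with $A^*$ land in the $A^*$-coset of their argument and joins with $C^*$ in its $C^*$-coset, hence stay inside $B^{**}$.) Conversely each $\mathbf{X}_n$ with $n\ge 2$ is non-categorical, as recorded after Example \exmref{XY}, so any $\mathbf{S}$ containing a copy is non-categorical. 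The right-handed statement is the $\lor$--$\land$ dual, with $\mathbf{Y}_n$ replacing $\mathbf{X}_n$.

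For the general case I would route everything through the Second Decomposition Theorem: because categoricity is a variety, $\mathbf{S}$ is categorical iff both $\mathbf{S}/\RRR$ and $\mathbf{S}/\LLL$ are, and by the two handed cases this holds iff $\mathbf{S}/\RRR$ omits every $\mathbf{X}_n$ and $\mathbf{S}/\LLL$ omits every $\mathbf{Y}_n$. It therefore suffices to show that a copy of $\mathbf{X}_n$ sits inside $\mathbf{S}$ iff one sits inside $\mathbf{S}/\RRR$ (and dually). One direction is immediate: $\mathbf{X}_n$ is left-handed, so $\RRR$ restricted to any copy inside $\mathbf{S}$ is trivial, whence the quotient map $\mathbf{S}\to\mathbf{S}/\RRR$ is injective on that copy and carries it to a copy in $\mathbf{S}/\RRR$.

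The reverse direction — lifting a copy of $\mathbf{X}_n$ from $\mathbf{S}/\RRR$ back up to $\mathbf{S}$ — is the main obstacle, and I would resolve it using the pullback description $\mathbf{S}\cong \mathbf{S}/\RRR\times_{\mathbf{S}/\DDD}\mathbf{S}/\LLL$. Given $\mathbf{X}_n\subseteq \mathbf{S}/\RRR$ with $\DDD$-classes $A>B>C$, choose in $\mathbf{S}/\LLL$ a single chain of representatives $\tilde a>\tilde b>\tilde c$ over these three classes; such a chain exists because $x\ge x\land y\land x$ always holds, so starting from any representative of $A$ one pushes representatives down into the partial order ($\tilde b=\tilde a\land \tilde b_0\land\tilde a$, then $\tilde c=\tilde b\land\tilde c_0\land\tilde b$). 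Then the assignment sending each element to its $\mathbf{X}_n$-coordinate paired with the representative of its $\DDD$-class, $\hat x\mapsto(\hat x,\tilde{[x]})$, lands in the pullback (the two $\DDD$-images agree by construction) and is an injective homomorphism, since the second coordinate depends only on the $\DDD$-class and the three-element chain realizes the lattice operations on $\{A,B,C\}$. Its image is the required copy of $\mathbf{X}_n$ in $\mathbf{S}$. Combining with the easy direction gives the embedding equivalence, and hence the general statement.

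Finally, that none of these algebras embeds in another splits cleanly. No $\mathbf{X}_n$ embeds in any $\mathbf{Y}_m$ or conversely, since a copy would be a subalgebra and therefore right-handed inside the right-handed $\mathbf{Y}_m$, whereas $\mathbf{X}_n$ carries the nontrivial rectangular $\DDD$-class $\{a_1,a_2\}$ and so is not right-handed. For $\mathbf{X}_n$ versus $\mathbf{X}_m$ with $n\neq m$, a copy of $\mathbf{X}_n$ inside $\mathbf{X}_m$ has three $\DDD$-classes which, being antichains, are forced into three distinct ambient $\DDD$-classes and hence are exactly $A$, $B_m$, $C$; since top and bottom have size two, they coincide with those of $\mathbf{X}_m$. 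The middle $B^*$ is then a subset of $B_m$ closed under $u\mapsto a_i\land u$ and $u\mapsto u\lor c_i$, and these operations carry an element to its $A$- and $C$-coset partners, which link up the entire component $B_m$; thus $B^*=B_m$ and $n=m$, the infinite cases being separated additionally by cardinality. The delicate point here is precisely the claim that the coset-partner operations generate the whole middle component, which is where I would concentrate the verification.
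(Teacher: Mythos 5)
Your proposal is correct and follows the paper's strategy in all essentials: the handed cases via Lemma \lemref{3.1} and the alternating-coset components (producing a copy of $\mathbf{X}_n$ or $\mathbf{Y}_n$ exactly when the mediating $b'$ of Proposition \prpref{orderfactorable} fails to exist), the converse via Corollary \corref{variety} and the non-categoricity of the examples, and the general case by reducing to $\mathbf{S}/\RRR$ and $\mathbf{S}/\LLL$. The one place you genuinely diverge is the lifting step: the paper lifts a three-class skew chain $\bar T$ of $\mathbf{S}/\RRR$ by choosing $a>b>c$ in its preimage $T\subseteq\mathbf{S}$ and observing that the internal subalgebra $\LLL_a>\LLL_b>\LLL_c$ of $T$ is isomorphic to $T/\RRR\cong\bar T$, whereas you build the embedding explicitly through the fibred product $\mathbf{S}\cong\mathbf{S}/\RRR\times_{\mathbf{S}/\DDD}\mathbf{S}/\LLL$ using a chain of representatives $\tilde a>\tilde b>\tilde c$ in $\mathbf{S}/\LLL$; your check that $\{\tilde a,\tilde b,\tilde c\}$ is a sublattice (so the second coordinate is a homomorphism) is exactly what makes this work, and it buys a more self-contained, verifiable construction at the cost of a little bookkeeping. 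You also give a substantially fuller argument for the final non-embeddability claim (handedness separates $\mathbf{X}_n$ from $\mathbf{Y}_m$; the coset-partner maps $u\mapsto a_i\land u$ and $u\mapsto u\lor c_j$ generate the whole middle component, forcing $B^*=B_m$), where the paper only offers the one-line remark that ``the nature of the middle row'' prevents embeddings; the point you flag as delicate is indeed the crux, and it holds because the alternating structure of Lemma \lemref{3.1} means every element of $B_m$ is reached from any other by alternately applying those two partner maps.
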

\begin{proof}
We begin with a skew chain $A > B > C$ in a left-handed skew lattice $\mathbf{S}$. Given $a > b > c$ in $\mathbf{S}$, where $a\in A$, $b\in B$ and $c\in C$, let $a > c\mypara a' > c'$ with $a\neq a'$. In the skew chain of Lemma \lemref{3.1}, $A^* > B^* > C^*$ where $A^* = \{a,a'\}$ and $C^* = \{c,c'\}$, we obtain the following configuration.
\begin{center}
\begin{tabular}{ccccc}
$a'$ & -- & $a$ & -- & $a'$ \\
$\vdots$ & & $\vdots$ & & $\vdots$ \\
$b\lor c'$ & --$_C$ & $b$ & --$_A$ & $a'\land b$ \\
$\vdots$ & & $\vdots$ & & $\vdots$ \\
$c'$ & -- & $c$ & -- & $c'$
\end{tabular}
\end{center}
When $a'\land c = b\lor c'$, the situation is compatible with $\mathbf{S}$ being categorical. Otherwise, in the $A^* C^*$-component of $b$ in $B^*$, the middle row in the above configuration extends to an alternating coset pattern of the type in Lemma \lemref{3.1}, giving us a copy of $\mathbf{X}_n$ where $2\leq n\leq \omega$. If $\mathbf{S}$ is not categorical, such a situation must occur. Conversely, any left-handed skew lattice containing a copy of $\mathbf{X}_n$ for $n\geq 2$ is not categorical. The first assertion now follows. The nature of the middle row implies that no $\mathbf{X}_m$ can be embedded in any $\mathbf{X}_n$ for $n > m$.

The right-handed case is similar. Clearly, a categorical skew lattice contains no $\mathbf{X}_n$ or $\mathbf{Y}_n$ copy for $n\geq 2$. Conversely, if a skew lattice $\mathbf{S}$ contains copies of none of them, then neither does $\mathbf{S}/\mathcal{R}$ or $\mathbf{S}/\mathcal{L}$ since every skew chain with three $\mathcal{D}$-classes in either $\mathbf{S}/\mathcal{R}$ or $\mathbf{S}/\mathcal{L}$ can be lifted to an isomorphic subalgebra of $\mathbf{S}$. (Indeed, given any skew chain $T$: $A > B > C$, one easily finds $a > b > c$ with $a\in A$, $b\in B$ and $c\in C$. Then, \emph{e.g.}, the sub-skew chain $\mathcal{R}_a > \mathcal{R}_b > \mathcal{R}_c$ of $\mathcal{R}$-classes in $T$ is isomorphic to $T/\mathcal{L}$. See [5].) Thus $\mathbf{S}/\mathcal{R}$ and $\mathbf{S}/\mathcal{L}$ are categorical, and hence so is $\mathbf{S}$.
\end{proof}

A skew chain $A > B > A'$ is \defn{reflective} if (1) $A$ and $A'$ are full cosets of each other in themselves, making $A\equiv A'$ with both being full $B$-cosets in themselves, and (2) $B$ consists of a single $AA'$-component. All $\mathbf{X}_n$ and $\mathbf{Y}_n$ are reflective. If $B$ is both an $A$-coset and an $A'$-coset for every reflective skew chain in a skew lattice $\mathbf{S}$ (making the skew chain a direct product of a chain $a > b > a'$ and a rectangular subalgebra), then $\mathbf{S}$ is categorical. Indeed, copies of $\mathbf{X}_n$ or $\mathbf{Y}_n$ for $n\geq 2$ are eliminated as subalgebras, while $\mathbf{X}_1$ and $\mathbf{Y}_1$ clearly factor as stated.

The converse is also true. Consider a reflective skew chain $A > B > A'$ in a categorical skew lattice. Let $\varphi: A \to B$ be a coset bijection of $A$ onto an $A$-coset in $B$ and let $\psi : B\to A'$ be a coset bijection of $B$ onto $A'$ such that the composition $\psi\circ \varphi$ is the unique coset bijection of $A$ onto $A'$. As partial bijections, the only way for $\psi\circ \varphi$ to be both one-to-one and onto is for $\varphi$ and $\psi$ to be full bijections between $A$ and $B$, and between $B$ and $A'$, respectively, thus making $B$ both a full $A$-coset and a full $A'$-coset within itself. We thus have:

\begin{proposition}
\prplabel{3.4}
A skew lattice $\mathbf{S}$ is categorical if and only if every reflective skew chain $A > B > A'$ in $\mathbf{S}$ factors as a direct product of a chain, $a > b > a'$, and a rectangular skew lattice.
\end{proposition}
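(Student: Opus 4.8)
The plan is to establish the equivalence by reducing the factoring condition to a statement purely about cosets and then applying the two tools already in hand: the coset-bijection description of categoricity (Proposition \prpref{orderfactorable}) and the forbidden-subalgebra classification (Theorem \thmref{XY}). The first step is to record that, for a reflective skew chain $A > B > A'$, the defining conditions already force $A$ and $A'$ to be single $B$-cosets in themselves and full cosets of one another; consequently the chain is a direct product of a three-element chain $a > b > a'$ with a rectangular skew lattice $E$ precisely when, in addition, $B$ is a single $A$-coset and a single $A'$-coset. Indeed, once all three coset partitions are trivial, every coset bijection among $A$, $B$, $A'$ is a full bijection, these are isomorphisms of the rectangular $\mathcal{D}$-classes by the facts recalled before the definition of categoricity, and \eqnref{determine1} and \eqnref{determine2} then force the operations to agree with those of $\{a>b>a'\} \times E$. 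So it suffices to prove: $\mathbf{S}$ is categorical if and only if $B$ is a single $A$-coset and a single $A'$-coset for every reflective skew chain of $\mathbf{S}$.

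For the forward direction I would argue as follows. A reflective skew chain $A > B > A'$ is a subalgebra, hence categorical by Corollary \corref{variety}. Since $A$ and $A'$ are full cosets of each other, there is a unique coset bijection $\eta : A \to A'$, and it is a full bijection. Fix $b \in B$, let $Y$ be its $A$-coset and $Z$ its $A'$-coset in $B$, and take coset bijections $\varphi : A \to Y$ and $\psi : Z \to A'$; because $A$ and $A'$ are single $B$-cosets, $\varphi$ is full onto $Y$ and $\psi$ is full from $Z$ onto $A'$. The pair through $b$ witnesses that $\psi\circ\varphi$ is nonempty, so by categoricity it is a coset bijection $A \to A'$, necessarily $\eta$. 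Comparing domains and ranges, the composite is defined on all of $A$ and onto all of $A'$ only if $Y \subseteq Z$ and $Z \subseteq Y$, i.e. $Y = Z$. Hence the $A$-coset and $A'$-coset partitions of $B$ coincide, so $-_A$ and $-_{A'}$ agree and each $AA'$-component is a single $A$-coset; as $B$ is one component, $B$ is a single $A$-coset and a single $A'$-coset.

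For the converse I would invoke Theorem \thmref{XY}: it is enough to show that a skew lattice in which every reflective skew chain factors contains no copy of $\mathbf{X}_n$ or $\mathbf{Y}_n$ with $n \geq 2$. Each $\mathbf{X}_n$ and $\mathbf{Y}_n$ is itself a reflective skew chain, but for $n \geq 2$ its middle class is not a single coset, since the $A$- and $C$-cosets have order $2$ while the class has $2n > 2$ elements; thus it does not factor as a direct product and so cannot embed. On the other hand $\mathbf{X}_1$ and $\mathbf{Y}_1$ do factor, in accordance with their being categorical. By Theorem \thmref{XY}, excluding all $\mathbf{X}_n, \mathbf{Y}_n$ with $n \geq 2$ is exactly categoricity.

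I expect the main obstacle to be the forward direction's passage from the pointwise equality $Y = Z$ to the global product decomposition: one must verify cleanly that coincidence of the two coset partitions together with the single-component hypothesis genuinely collapses $B$ to one coset, and then assemble the isomorphism with $\{a>b>a'\} \times E$ from \eqnref{determine1} and \eqnref{determine2} so that the chosen coset bijections are mutually compatible, i.e. $\eta = \psi\circ\varphi$; this last assembly is routine but must be carried out with care. A secondary point worth stating explicitly is that ``reflective skew chain in $\mathbf{S}$'' is read as a reflective sub-skew-chain, which is what lets Corollary \corref{variety} supply categoricity of the chain in the forward direction and what lets an embedded $\mathbf{X}_n$ or $\mathbf{Y}_n$ serve as a non-factoring reflective chain in the converse.
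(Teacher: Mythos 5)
Your proposal is correct and follows essentially the same route as the paper: one direction eliminates the reflective non-factoring algebras $\mathbf{X}_n,\mathbf{Y}_n$ ($n\geq 2$) via Theorem \thmref{XY}, and the other composes coset bijections $A\to B\to A'$ and uses that the unique coset bijection $A\to A'$ is a full bijection to force the $A$- and $A'$-coset partitions of $B$ to coincide. Your explicit use of the single-component hypothesis to pass from ``the two coset partitions agree'' to ``$B$ is a single coset'' is a welcome bit of care that the paper's own argument leaves implicit.
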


\section{Strictly categorical skew lattices}
\seclabel{strict}

Recall that a categorical skew lattice $\mathbf{S}$ is \defn{strictly categorical} if for every skew chain of $\mathcal{D}$-classes $A > B > C$ in $\mathbf{S}$, each $A$-coset in $B$ has nonempty intersection with each $C$-coset in $B$, making both $B$ an entire $AC$-component and empty coset bijections unnecessary. Examples are:
\begin{enumerate}[label=\alph*)]
\item \emph{Normal} skew lattices characterized by the conditions: $x\land y\land z\land w = x\land z\land y\land w$; equivalently, every subset $[e]\downarrow = \{x\in S\mid e\geq x\} = \{e\land x\land e\mid x\in S\}$ is a sublattice;
\item \emph{Conormal} skew lattices satisfying the dual condition $x\lor y\lor z\lor w = x\lor z\lor y\lor w$; equivalently, every subset $[e]\uparrow = \{x\in S\mid e\leq x\} = \{e\lor x\lor e\mid x\in S\}$ is a sublattice;
\item \emph{Primitive} skew lattices consisting of two $\mathcal{D}$-classes: $A > B$ and rectangular skew lattices.
\item \emph{Skew diamonds} in cancellative skew lattices, and in particular, skew diamonds in rings. (A skew diamond is a skew lattice $\{J > A, B > M\}$ consisting of two incomparable $\mathcal{D}$-classes $A$ and $B$ along with their join $\mathcal{D}$-class $J$ and their meet $\mathcal{D}$-class $M$.)  See \cite{KL}.
\end{enumerate}
See \cite{KL} for general results on normal skew lattices. Their importance is due in part to skew Boolean algebras being normal as skew lattices \cite{Bign95,Bign96,Leec96,Leec0x,Spin06}. Some nice counting theorems for categorical and strictly categorical skew lattices are given in \cite{JPC}.

\begin{theorem}
\thmlabel{strict}
Let $A > B > C$ be a strictly categorical skew chain. Then:
\begin{enumerate}[label=\emph{\roman*)}]
\item For any $a\in A$, all images of $a$ in $B$ lie in a unique $C$-coset in $B$;
\item For any $c\in C$, all images of $c$ in $B$ lie in a unique $A$-coset in $B$;
\item Given $a > c$ with $a\in A$ and $c\in C$, a unique $b\in B$ exists such that $a > b > c$. This $b$ lies jointly in the $C$-coset in $B$ containing all images of $a$ in $B$ and in the $A$-coset in $B$ containing all images of $c$ in $B$.
\end{enumerate}
\end{theorem}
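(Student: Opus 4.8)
The plan is to establish (i) directly, obtain (ii) as its $\lor$--$\land$ dual, and then deduce (iii) from (i) and (ii). By the Second Decomposition Theorem it suffices to treat the left-handed case, the right-handed case being dual and the general case following via the pullback $\mathbf{S}/\RRR\times_{\mathbf{S}/\DDD}\mathbf{S}/\LLL$, exactly as in the proof of Theorem \thmref{identities}. So assume $\mathbf{S}$ is left-handed. Then the image of $a$ in the $A$-coset of any $y\in B$ is $a\land y$, so the images of $a$ in $B$ are precisely the elements of $a\land B=\{a\land b:b\in B\}$, one in each $A$-coset; dually, the $A$-coset of $v\in B$ is $A\land v$ and the $C$-coset of $v$ is $v\lor C$. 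These explicit descriptions, together with the left-handed categorical implication \eqnref{catimp-lh}, are the working tools.

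For (i), fix $a\in A$ and let $b_1,b_2$ be two images of $a$ in $B$; I must show $b_1-_C b_2$. Let $Y_2$ be the $A$-coset of $b_2$ and $Z_1$ the $C$-coset of $b_1$. Because the chain is strictly categorical, every $A$-coset in $B$ meets every $C$-coset in $B$, so I may pick $w\in Y_2\cap Z_1$; since $w-_C b_1$, the description of $C$-cosets lets me write $w=b_1\lor c$ for a suitable $c\in C$. Applying \eqnref{catimp-lh} with $a\geq b_1\succeq c$ gives $a\land w=a\land(b_1\lor c)=b_1\lor(a\land c)$. The point is then to check that this equals $w$, i.e. that $a\land w=w$, so that $a>w$; then $w$ is the unique image of $a$ in $Y_2$, forcing $w=b_2\in Z_1$ and hence $b_1-_C b_2$. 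Statement (ii) follows by the $\lor$--$\land$ dual argument, using \eqnref{catimp1}--\eqnref{catimp2} in dual form and the description $A\land v$ of $A$-cosets. I expect the equality $a\land w=w$ to be the main obstacle: since $a\land c$ and $c$ always share an $A$-coset in $C$ but not, a priori, a $B$-coset, one must use strict categoricity in full—namely the freedom, guaranteed by $B$ being a single $AC$-component, to align the $-_A$ and $-_B$ coset data in $C$ so that $b_1\lor(a\land c)=b_1\lor c$. This reconciliation of the two coset partitions of $B$ is exactly where mere categoricity is insufficient, as the $\mathbf{X}_n$ of Example \exmref{XY} already show.

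For (iii), let $a>c$. By (ii) all images of $c$ in $B$ lie in one $A$-coset $Y$, and by (i) all images of $a$ in $B$ lie in one $C$-coset $Z$. Let $b$ be the image of $a$ in $Y$; by (i), $b\in Z$, so $b\in Y\cap Z$, and dually the image of $c$ in $Z$ also lies in $Y\cap Z$. The existence half amounts to identifying these two elements, equivalently to showing $b>c$; I would obtain this by pairing $b$ with an image $\hat b$ of $c$ lying in the same $A$-coset $Y$ and transporting $\hat b>c$ to $b>c$ by a short computation with \eqnref{catimp-lh} using $a>b$. Uniqueness, by contrast, is immediate from (i): if $a>b>c$ and $a>b'>c$ with $b\neq b'$, then $b,b'$ are two distinct images of $a$, hence lie in a common $C$-coset by (i); but they are simultaneously two images of $c$ in $B$, which in the primitive chain $B>C$ must occupy \emph{distinct} $C$-cosets unless equal—a contradiction. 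Hence $b=b'$, and the interpolant lies jointly in $Z$ and in $Y$, as asserted.
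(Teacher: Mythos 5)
The heart of the theorem is part (i), and that is exactly where your argument has a genuine gap. Having fixed two images $b_1,b_2$ of $a$ in $B$, you pick an \emph{arbitrary} $w$ in $Y_2\cap Z_1$, write $w=b_1\lor c$, compute $a\land w=b_1\lor(a\land c)$ via \eqnref{catimp-lh}, and then defer the crucial equality $a\land w=w$ to an unspecified ``alignment'' of the $-_A$ and $-_C$ coset data that strict categoricity is supposed to provide. No argument is given, and none can be given for an arbitrary $w$: the $AC$-coset $Y_2\cap Z_1$ may contain many elements (take $A>B>C$ to be the direct product of a chain $x>y>z$ with a two-element rectangular algebra, so that $B$ is simultaneously a single $A$-coset and a single $C$-coset), whereas $Y_2$, being an $A$-coset, contains exactly one image of $a$; hence $a\land w=w$ holds for at most one element of $Y_2\cap Z_1$, and producing that element is precisely the content of the theorem, not a verification one can postpone. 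Note that your computation \emph{would} close instantly if $c$ were chosen to be an image of $a$ in $C$, since then $a\land c=c$ and $a\land w=b_1\lor c=w$; but then the burden shifts to showing that some image $c$ of $a$ in $C$ satisfies $b_1\lor c\in Y_2$. That is exactly the step the paper supplies and you do not: the elements $c\lor b_1\lor c$, for $c$ ranging over the images of $a$ in $C$, are the images of $a$ inside the $C$-coset $C\lor b_1\lor C$ and parameterize its $AC$-cosets, and strict categoricity makes $X\mapsto X\cap(C\lor b_1\lor C)$ a bijection from the $A$-cosets $X$ of $B$ onto those $AC$-cosets, so every $A$-coset already contains one of these images; since each $A$-coset contains only one image of $a$, all images of $a$ lie in $C\lor b_1\lor C$.

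Two smaller points. The existence half of (iii) is likewise only sketched (``I would obtain this by \dots\ a short computation''); the paper instead sets $b=a\land v\land a$ for the unique $v$ in the relevant $AC$-coset with $v>c$ and checks $a>b>c$ directly, while your uniqueness argument via (i) is fine. Also, the reduction to the left-handed case is plausible but not free here: Theorem \thmref{strict} is a statement about images and cosets in a fixed skew chain rather than membership in a variety, so one should at least remark that the natural partial order and the coset partitions of $\mathbf{S}$ are recovered componentwise from $\mathbf{S}/\RRR$ and $\mathbf{S}/\LLL$ under the fibred-product decomposition; the paper sidesteps this by arguing two-sidedly throughout.
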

\begin{proof}
To verify (i) we assume without loss of generality that $C$ is a full $B$-coset within itself. If $a\land C\land a = \{c\in C\mid a > c\}$ is the image set of $a$ in $C$ parameterizing the $A$-cosets in $C$ and $b\in B$ is such that $a > b$, then $\{c\lor b\lor c\mid c\in a\land C\land a\}$, the set of all images of $a$ in the $C$-coset $C\lor b\lor C$ in $B$, parameterizes the $AC$-cosets in $B$ lying in $C\lor b\lor C$ (since $AC$-cosets in $C\lor b\lor C$ are inverse images of the $A$-cosets in $C$ under the coset bijection of $C\lor b\lor C$ onto $C$). By assumption, all $A$-cosets $X$ in $B$ are in bijective correspondence with all these $AC$-cosets under the map $X\mapsto X\cap C\lor b\lor C$. Thus each element $x$ in $\{c\lor b\lor c\mid c\in a\land C'\land a\}$ is the (necessarily) unique image of $a$ in the $A$-coset in $B$ which $x$ belongs, and as we traverse through these $x$'s, every such $A$-coset occurs as $A\land x\land A$. Thus all images of $a$ in $B$ lie within the $C$-coset $C\lor b\lor C$ in $B$. In similar fashion one verifies (ii). Finally, given $a > c$ with $a\in A$ and $c\in C$, a unique $AC$-coset $U$ exists that is the intersection of the $A$-coset containing all images of $c$ in $B$ and the $C$-coset containing all images of $a$ in $B$. In particular, $U$ contains unique elements $u,v$ such that $a > u$ and $v > c$. Consider $b = a\land v\land a$ in $B$. Clearly $a > b > c$ so that $b$ is a simultaneous image of $a$ and $c$ in $B$ (since $b -_A v$) and thus is in $U$; moreover, by uniqueness of $u$ and $v$ in $U$, we have $u = b = v$.
\end{proof}

This leads to the following multiple characterization of strictly categorical skew lattices.

\begin{theorem}
\thmlabel{4.2}
The following seven conditions on a skew lattice $\mathbf{S}$ are equivalent.
\begin{enumerate}[label=\emph{\roman*)}]
\item $\mathbf{S}$ is strictly categorical;
\item $\mathbf{S}$ satisfies
\[
x > y > z\quad\&\quad x > y' > z\quad\&\quad y\greenD y'
\quad\Rightarrow\quad y = y'\,;
\]
\item $\mathbf{S}$ satisfies
\[
x \geq y \geq z\quad\&\quad x \geq y' \geq z\quad\&\quad y\greenD y'
\quad\Rightarrow\quad y = y'\,;
\]
\item $\mathbf{S}$ has no subalgebra isomorphic to either of the following $4$-element skew chains.
\begin{figure}[htb]
\begin{center}
\begin{minipage}{0.4\linewidth}
\begin{tabular}{rcccl}
    &         & $a$    &         &     \\
    & $\iddots$ &        & $\ddots$  &     \\
$b$ &         & --$_{\mathcal{L}}$ &         & $b'$ \\
    & $\ddots$  &        & $\iddots$ &     \\
    &         & $c$    &         &
\end{tabular}
\end{minipage}
\begin{minipage}{0.4\linewidth}
\begin{tabular}{rcccl}
    &         & $a$    &         &     \\
    & $\iddots$ &        & $\ddots$  &     \\
$b$ &         & --$_{\mathcal{R}}$ &         & $b'$ \\
    & $\ddots$  &        & $\iddots$ &     \\
    &         & $c$    &         &
\end{tabular}
\end{minipage}
\end{center}
\end{figure}
\item If $a > b$ in $\mathbf{S}$, the interval subalgebra $[a,b] = \{x\in S\mid a\geq x\geq b\}$ is a sublattice.
\item Given $a\in S$, $[a]\uparrow = \{x\in S\mid x\geq a\}$ is a normal subalgebra of $\mathbf{S}$ and $[a]\downarrow = \{x\in S\mid a\geq x\}$ is a conormal subalgebra of $S$.
\item $\mathbf{S}$ is categorical and given any skew chain $A > B > C$ of $\mathcal{D}$-classes in $S$, for each coset bijection $\varphi : A\to C$, there exist unique coset bijections $\psi : A\to B$ and $\chi : B\to C$ such that $\varphi = \chi\circ \psi$.
\item Every reflective skew chain $A > B > C$ is an isochain.
\end{enumerate}
\end{theorem}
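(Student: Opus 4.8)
The plan is to treat (ii)--(vi) as a cluster of ``local'' reformulations of the same no-repeated-middle condition, to read (vii) and (viii) off the parallelism machinery of Section~\secref{parallel} together with Theorem~\thmref{strict}, and then to isolate the single substantive implication (ii)$\Rightarrow$(i) as the crux that pins the whole cluster to strict categoricity.

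First I would dispatch (i)$\Rightarrow$(ii). If $x>y>z$ and $x>y'>z$ with $y\greenD y'$, then since $x\greenD y$ together with $x\ge y$ would force $x=y$ (and likewise below), the classes satisfy $\DDD_x>\DDD_y=\DDD_{y'}>\DDD_z$, a genuine $3$-term skew chain; Theorem~\thmref{strict}(iii) supplies a unique middle, so $y=y'$. The equivalence (ii)$\Leftrightarrow$(iii) is routine: the extra cases in (iii) are $y\greenD x$ or $y\greenD z$, each of which collapses to an equality, while otherwise all inequalities are strict and (ii) applies verbatim. For (ii)$\Leftrightarrow$(iv) I note that each forbidden $4$-element chain is a minimal witness to a failure of (ii); conversely, given $x>y,y'>z$ with $y\neq y'$ and $y\greenD y'$, I set $y''=y\land y'$, which lies in $\DDD_y$, is $\RRR$-related to $y$ and $\LLL$-related to $y'$, and still satisfies $x>y''>z$. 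Since $\LLL\cap\RRR=\Delta$ forces $y\not\LLL y'$ or $y\not\RRR y'$, one of the pairs $\{y,y''\}$, $\{y'',y'\}$ is a genuine $\RRR$- or $\LLL$-pair, producing one of the two forbidden subalgebras. Finally (iii)$\Leftrightarrow$(v)$\Leftrightarrow$(vi) is elementary: for $a>b$ the interval $[a,b]$ is always a subalgebra, it is a sublattice exactly when its $\DDD$-classes are singletons, and that is (iii) read on $[a,b]$; moreover $[a]\uparrow$ normal (resp.\ $[a]\downarrow$ conormal) says precisely that every interval $[e,a]$ with $e\ge a$ (resp.\ $[a,f]$ with $a\ge f$) is a sublattice, so (vi) amounts to ``all intervals are sublattices,'' i.e.\ (v).

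For (vii) and (viii) I would lean on parallelism. Given (i), Theorem~\thmref{strict} attaches to each coset bijection $\varphi:A\to C$ the unique middles of part~(iii), and Proposition~\prpref{orderfactorable} shows these are pairwise parallel, so they assemble into coset bijections $\psi:A\to B$ and $\chi:B\to C$ with $\varphi=\chi\circ\psi$, the factorization being unique because the middles are; conversely unique factorability of every $\varphi$ forces unique middles, giving (ii). Thus (vii) sits between (i) and (ii) and is pinned to both once the crux is in hand. For (viii), Theorem~\thmref{strict} forces each reflective chain to have $B$ a single full $A$- and $C$-coset (an isochain), whereas a failure of (ii) yields a sub-skew-chain with singleton top and bottom classes (so they are full cosets of each other) but a middle class of size $>1$, i.e.\ a reflective chain that is not an isochain. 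Here I would be careful to use the reflective notion relevant to the statement rather than insisting on a single $AC$-component, since by Proposition~\prpref{3.4} a single-component reflective chain is already a product in any categorical $\mathbf S$.

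The main obstacle is the closing implication (ii)$\Rightarrow$(i). From (ii) categoricity comes for free: each $\mathbf{X}_n$ and $\mathbf{Y}_n$ with $n\ge 2$ contains $a_1>b_1>c_1$ and $a_1>b_3>c_1$ with $b_1\greenD b_3$ distinct, so (ii) forbids them and Theorem~\thmref{XY} applies. The real work is to upgrade ``categorical'' to ``strictly categorical,'' that is, to show that (ii) prevents the middle class $B$ of any chain $A>B>C$ from splitting into more than one $AC$-component. In a categorical skew lattice the $A$- and $C$-cosets lying inside a single component meet pairwise, so strictness is equivalent to single-componentness, and the danger is exactly a second component supplying, for some common $a$ and $c$, a second element between them. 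I expect the clean route is a counting/coherence argument: fixing $a\in A$, its images in $B$ form a transversal of the $A$-cosets, each sitting above a transversal of the $B$-cosets in $C$, and (ii) makes the resulting map onto $\{c:a>c\}$ a bijection; a genuinely separate second component would, through the coset determination of $\land$ and $\lor$ in \eqnref{determine1}--\eqnref{determine2} and regularity \eqnref{regular1}--\eqnref{regular2}, place two of these images above a single $c$, contradicting uniqueness. The delicate point is verifying that associativity really \emph{forces} this collision rather than merely failing to forbid it; a short associativity computation in the spirit of Lemma~\lemref{3.1} is what I would use to close it, after which (ii)$\Rightarrow$(i) completes the chain of equivalences.
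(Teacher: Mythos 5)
Your overall architecture agrees with the paper's: condition (ii) is the hub, (i)$\Rightarrow$(ii) comes from Theorem \thmref{strict}(iii), (iii)--(vi) are local reformulations of the unique-middle condition, (vii)/(viii) are read off the coset-bijection picture, and (ii) yields categoricity by excluding the $\mathbf{X}_n$ and $\mathbf{Y}_n$ via Theorem \thmref{XY}. The genuine gap is the remaining half of (ii)$\Rightarrow$(i): you must show that in any skew chain $A>B>C$ every $A$-coset in $B$ meets every $C$-coset in $B$, and at exactly this point your text switches from proof to intention (``I expect the clean route is a counting/coherence argument,'' ``a short associativity computation \ldots\ is what I would use to close it''). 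You never exhibit the collision you need, and you yourself flag the decisive step as unverified; in addition, the intermediate claim that in a categorical skew chain the $A$- and $C$-cosets inside a single $AC$-component already meet pairwise is asserted without argument. So the crux of the theorem is left unestablished.

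The paper closes this directly, with no components, no counting, and no contradiction. Given arbitrary $x,y\in B$, choose $a\in A$ and $c\in C$ with $a>x>c$. Put $u=c\lor y\lor c$ and $v=a\land y\land a$; then $u$ lies in the $C$-coset of $y$ in $B$ and $v$ in the $A$-coset of $y$ in $B$, and a short check using $a\geq c$ and regularity gives $a> a\land u\land a> c$ and $a> c\lor v\lor c> c$, both middle terms lying in $B$. Condition (ii) then forces $a\land u\land a = x = c\lor v\lor c$, whence $x -_A u -_C y$ and $x -_C v -_A y$. Since $x$ and $y$ were arbitrary, every $A$-coset in $B$ meets every $C$-coset in $B$, which together with the categoricity you did correctly obtain gives (i). I recommend replacing your speculative closing paragraph with this two-line construction.
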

\begin{proof}
Theorem \thmref{strict}(iii) gives us (i)$\Rightarrow$(ii). Conversely, if $\mathbf{S}$ satisfies (ii) then no subalgebra of $\mathbf{S}$ can be one of the forbidden subalgebras of the last section, making $\mathbf{S}$ categorical. We next show that given $x,y\in B$, there exist $u,v\in B$ such that $x-_A u-_C y$ and $x-_C v -_A y$. This guarantees that in $B$, every $A$-coset meets every $C$-coset. Indeed, pick $a\in A$ and $c\in C$ so that $a > x > c$. Note that $a > a \land (c\lor y\lor c)\land a$, $c\lor (a\land y\land a)\lor c > c$. But by assumption $x$ is the unique element in $B$ between $a$ and $c$ under $>$. Thus $a\land (c\lor y\lor c)\land a = x = c\lor (a\land y\land a)\lor c$ so that both $x-_A c\lor y\lor c -_C y$ and $x-_C a\land y\land a -_A y$ in $B$, which gives (ii)$\Rightarrow$(i).

Next let $\mathbf{S}$ be categorical with $A > B > C$ as stated in (vii). The unique factorization in (vii) occurs precisely when (ii) holds, making (ii) and (vii) equivalent, with (viii) being a variant of (vii). Finally, (iii)-(vi) are easily seen to be equivalent variants of (ii).
\end{proof}

\begin{corollary}
\corlabel{strictvariety}
Strictly categorical skew lattices form a variety of skew lattices.
\end{corollary}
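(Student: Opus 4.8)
The plan is to exhibit the class of strictly categorical skew lattices as an equationally defined class and then invoke Birkhoff's theorem; the substantive work has already been carried out in Theorem \thmref{4.2}, so all that remains is to repackage one of its seven equivalent conditions as a set of identities. The natural choice is condition (vi): $\mathbf{S}$ is strictly categorical if and only if, for every $a\in S$, the subalgebra $[a]\uparrow = \{x\in S\mid x\geq a\}$ is normal and the subalgebra $[a]\downarrow = \{x\in S\mid a\geq x\}$ is conormal. Since normality ($x\land y\land z\land w = x\land z\land y\land w$) and conormality ($x\lor y\lor z\lor w = x\lor z\lor y\lor w$) are themselves equational, the only issue is to show that ``(co)normality of $[a]\uparrow$ and $[a]\downarrow$ for every $a$'' collapses to finitely many identities in the ambient language.

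First I would record the standard fact that in \emph{any} skew lattice one has $[a]\uparrow = \{a\lor x\lor a\mid x\in S\}$ and $[a]\downarrow = \{a\land x\land a\mid x\in S\}$, and that both are subalgebras. Closure under the relevant operation is immediate, and closure under the opposite operation follows from the dualities \eqnref{dualities}: for instance, if $x,y\leq a$ then $a\lor(x\lor y) = a = (x\lor y)\lor a$, whence $a\land(x\lor y) = x\lor y = (x\lor y)\land a$, so $x\lor y\leq a$. Because the elements $a\lor x\lor a$ (as $x$ ranges over $S$) are \emph{exactly} the elements of $[a]\uparrow$, the requirement that $[a]\uparrow$ be normal for every $a$ is equivalent to the single identity
\begin{equation*}
(a\lor p\lor a)\land(a\lor q\lor a)\land(a\lor r\lor a)\land(a\lor s\lor a) = (a\lor p\lor a)\land(a\lor r\lor a)\land(a\lor q\lor a)\land(a\lor s\lor a)\,,
\end{equation*}
and dually the requirement that $[a]\downarrow$ be conormal for every $a$ is equivalent to
\begin{equation*}
(a\land p\land a)\lor(a\land q\land a)\lor(a\land r\land a)\lor(a\land s\land a) = (a\land p\land a)\lor(a\land r\land a)\lor(a\land q\land a)\lor(a\land s\land a)\,.
\end{equation*}
By Theorem \thmref{4.2} (i)$\Leftrightarrow$(vi), a skew lattice is strictly categorical precisely when it satisfies both of these identities, so the class is defined by identities and is therefore a variety.

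I expect the main (and essentially only) obstacle to be checking that this equational translation is faithful in both directions. The forward direction is clear once $[a]\uparrow$ and $[a]\downarrow$ are known to be subalgebras, since the (co)normal identities then hold on them by (vi). For the converse one must be sure that imposing the two displayed identities for all values of the parameters really forces $[a]\uparrow$ to be normal and $[a]\downarrow$ conormal for each fixed $a$ --- this is exactly where the observation that the substituted terms exhaust the up- and down-sets is used --- and that no separate hypothesis of categoricity need be added, since Theorem \thmref{4.2} already guarantees that (vi) implies categoricity. With these points in hand the corollary is immediate from Birkhoff's theorem (or, if one prefers a direct argument, from the evident closure of an equationally defined class under subalgebras, products, and homomorphic images).
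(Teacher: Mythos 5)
Your proposal is correct, and it shares the paper's overall strategy: treat Theorem~\thmref{4.2} as the substantive result and reduce the corollary to exhibiting characterizing identities. Where you differ is in which of the equivalent conditions gets equationalized. You take condition (vi) and eliminate the quantification over $[a]\uparrow$ and $[a]\downarrow$ via the substitutions $x\mapsto a\lor p\lor a$ and $x\mapsto a\land p\land a$; since these terms exhaust the up- and down-sets, your two five-variable identities faithfully define the class, and the subalgebra verifications you supply are the only extra bookkeeping needed. The paper instead equationalizes condition (iii) into the single four-variable identity \eqnref{strictident}, namely $x\lor (y\land z\land u\land y)\lor x = x\lor (y\land u\land z\land y)\lor x$: the two sides are $\mathcal{D}$-related and both lie between $x\lor y\lor x$ and $x$ in the natural partial order, so strict categoricity forces them to coincide, while conversely the substitution $x\mapsto c$, $y\mapsto a$, $z\mapsto b\land b'$, $u\mapsto b'\land b$ recovers condition (iii). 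Your route is more mechanical and transparent, since normality and conormality are already given by identities; the paper's buys a shorter single identity at the price of a small ad hoc check that both sides of \eqnref{strictident} are $\mathcal{D}$-related elements of a common interval. Both are complete proofs granted Theorem~\thmref{4.2}.
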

\begin{proof}
We will show that strictly categorical skew lattices are characterized by the following identity (or its dual):
\begin{equation}
\eqnlabel{strictident}
x\lor (y\land z\land u\land y)\lor x = x\lor (y\land u\land z\land y)\lor x\,.
\end{equation}
Let $e$ denote the left side and $f$ denote the right side. Observe that $e\mathcal{D} f$ since $z\land u\greenD u\land z$. Note that $x\lor y\lor x\geq e, f \geq x$ by \eqnref{absorb}. Hence if a skew lattice $\mathbf{S}$ is strictly categorical, then \eqnref{strictident} holds by Theorem \thmref{4.2}(iii). Conversely, let \eqnref{strictident} hold in $\mathbf{S}$ and suppose that $a\geq$ both $b,b'\geq c$ in $S$ with $b\greenD b'$. Assigning $x\mapsto c$, $y\mapsto a$, $z\mapsto b\land b'$ and $u\mapsto b'\land b$ reduced \eqnref{strictident} to $b = b\land b'\land b = b'\land b\land b'$ so that $\mathbf{S}$ is strictly categorical by Theorem \thmref{4.2}(iii).
\end{proof}

While distributive skew lattices are categorical, they need not be strictly categorical, but \emph{a strictly categorical skew lattice} $\mathbf{S}$ \emph{is distributive iff} $\mathbf{S}/\mathcal{D}$ \emph{is distributive.} (See \cite[Theorem 5.4]{KL}.)

It is natural to ask: \emph{What is the variety generated jointly from the varieties of normal and conormal skew lattices?} To refine this question, we first proceed as follows.

A primitive skew lattice $A > B$ is \defn{order-closed} if for $a,a'\in A$ and $b,b'\in B$, both $a,a'> b$ and $a > b,b'$ imply $a' > b'$.
\begin{table}[htb]
\begin{tabular}{lccccc}
$A$ & $a$ & -- & -- & -- & $a'$ \\
    & $\vdots$ & $\ddots$ & & $\iddots$ & \\
    & $\vdots$ & $\iddots$ & & $\ddots$ & \\
$B$ & $b$ & -- & -- & -- & $b'$
\end{tabular}
\end{table}
A primitive skew lattice $A > B$ is \defn{simply order-closed} if $a > b$ for all $a\in A$ and all $b\in B$. In this case the cosets of $A$ and $B$ in each other are singleton subsets. It is easy to verity that \emph{a primitive skew lattice} $\mathbf{S}$ \emph{is order-closed if and only if it factors into a product} $D\times T$ \emph{where} $D$ \emph{is rectangular and} $T$ \emph{is simply order-closed and primitive}.

A skew lattice is \defn{order-closed} if all its primitive subalgebras are thus. Examples include:
\begin{enumerate}[label=\alph*)]
\item Normal skew lattices and conormal skew lattices;
\item The sequences of examples $\mathbf{X}_n$ and $\mathbf{Y}_n$ of section 3.
\end{enumerate}
On the other hand, primitive skew lattices that are not order-closed are easily found. (See \cite[{\S\S}1,2]{Leec93}.)

\begin{theorem}
Order-closed skew lattices form a variety of skew lattices.
\end{theorem}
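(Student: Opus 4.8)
The plan is to avoid hunting for a defining identity and instead verify directly that the class is closed under the operators $H$, $S$, and $P$, invoking Birkhoff's theorem. The first step is to record order-closedness as a single elementwise implication. Since the two $\DDD$-classes of any primitive subalgebra sit inside $\DDD$-classes $A>B$ of $\mathbf S$, and the full pair $A\cup B$ (the preimage of a comparable pair in $\mathbf S/\DDD$) is itself a primitive subalgebra, $\mathbf S$ is order-closed precisely when, for all $a,a',b,b'\in S$,
\[
a\greenD a',\quad b\greenD b',\quad a>b,\quad a'>b,\quad a>b'\ \Longrightarrow\ a'>b'.
\]
As $\greenD$ and $>$ are each definable by equations, this is a conjunction of quasi-identities; hence the class is automatically closed under subalgebras and direct products (and contains the trivial algebra). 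The entire content therefore reduces to closure under homomorphic images.

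This is the step I expect to be the main obstacle. Let $f\colon\mathbf S\twoheadrightarrow\mathbf T$ be surjective with $\mathbf S$ order-closed, and let $\bar a,\bar a',\bar b,\bar b'\in T$ satisfy the hypotheses of the implication; I must derive $\bar a'>\bar b'$. The difficulty is that arbitrary preimages of $\bar a,\bar a',\bar b,\bar b'$ need not lie in matching $\DDD$-classes, so the hypotheses of order-closedness can fail upstairs. I would overcome this with the lemma that a surjective homomorphism carries each $\DDD$-class $\DDD_x$ \emph{onto} $\DDD_{f(x)}$: given $\bar u\greenD f(x)$ and any preimage $\xi$ of $\bar u$, the element $(\xi\lor x\lor\xi)\land x\land(\xi\lor x\lor\xi)$ lies in $\DDD_x$ and maps to $\bar u$, because $\bar u\greenD f(x)$ collapses the outer joins and meets.

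Granting this, I would relocate the whole configuration into the single comparable pair $\DDD_a>\DDD_b$ of $\mathbf S$. Fix a preimage $a$ of $\bar a$ and put $b:=a\land\beta\land a$ for a preimage $\beta$ of $\bar b$, so that $a>b$ and $f(b)=\bar b$. Using the lemma, choose $a''\in\DDD_a$ with $f(a'')=\bar a'$ and replace it by $b\lor a''\lor b$ to secure $a''>b$ (here $\bar a'>\bar b$ keeps the $f$-image equal to $\bar a'$); similarly choose $b''\in\DDD_b$ with $f(b'')=\bar b'$ and replace it by $a\land b''\land a$ to secure $a>b''$ (using $\bar a>\bar b'$). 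Now $a\greenD a''$, $b\greenD b''$, $a,a''>b$ and $a>b''$ all hold in $\mathbf S$, so order-closedness of $\mathbf S$ yields $a''>b''$, and applying $f$ gives $\bar a'>\bar b'$. This proves closure under $H$, completing the verification that order-closed skew lattices form a variety. I would also note that, as with Corollaries \corref{variety} and \corref{strictvariety}, one could alternatively manufacture an explicit identity after reducing to the left- and right-handed cases via the Second Decomposition Theorem; but the $HSP$ argument above is cleaner and sidesteps the delicate guesswork of pinning down such an identity.
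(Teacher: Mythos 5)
Your proof is correct, but it takes a genuinely different route from the paper. The paper's proof is constructive: it exhibits a single characterizing identity (equation (4.2) in the text, built from the generic configuration $x\land y$, $x\land y\land u\land v\land x\land y$, $x\land y\land v\land u\land x\land y$ between comparable $\DDD$-classes) and observes that order-closedness is equivalent to that identity holding. You instead verify closure under $H$, $S$ and $P$ and invoke Birkhoff's theorem; the only nontrivial step is closure under homomorphic images, which you handle correctly via the lemma that a surjective homomorphism maps $\DDD_x$ onto $\DDD_{f(x)}$, followed by the standard trick of replacing preimages by $b\lor a''\lor b$ and $a\land b''\land a$ to restore the order relations upstairs. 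The trade-off is clear: the paper's argument yields an explicit equational basis (useful for the join-variety problem posed immediately afterwards), while yours avoids the guesswork of finding such an identity at the cost of being non-constructive. One small repair is needed in your first step: as written, your elementwise implication uses the strict relation $>$, whose definition involves $x\neq y$, so it is not literally a quasi-identity, and implications with negated equations in the antecedent are not automatically preserved by products. You should state it with $\geq$ throughout; the resulting genuine quasi-identity is equivalent to order-closedness, since in the degenerate case $\DDD_a=\DDD_b$ the hypotheses force $a=a'=b=b'$ and the conclusion is trivial, and with that substitution the rest of your argument (including the passage back to strict inequalities via distinctness of $\DDD$-classes) goes through verbatim.
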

\begin{proof}
The following generic situation holds between comparable $\mathcal{D}$-classes in a skew lattice:
\begin{table}[htb]
\begin{tabular}{ccccc}
 $x\land y$ & -- & -- & -- & $(x\land y\land u\land v\land x\land y)\lor (y\land x)\lor (x\land y\land u\land v\land x\land y)$ \\
 $\vdots$ & $\ddots$ & & $\iddots$ & \\
 $\vdots$ & $\iddots$ & & $\ddots$ & \\
 $x\land y\land u\land v\land x\land y$ & -- & -- & -- & $x\land y\land v\land u\land x\land y$
\end{tabular}
\end{table}
where as usual, the dotted lines denote $\geq$ relationships. Being order-closed requires both expressions on the right side of the diagram to commute under $\lor$ (or $\land$). Commutativity under $\lor$ together with \eqnref{absorb} gives
\begin{equation}
\eqnlabel{orderident}
(x\land y\land \underbrace{v\land u}\land x\land y)\lor (y\land x)\lor (x\land y\land \underbrace{u\land v}\land x\land y) =
(x\land y\land \underbrace{u\land v}\land x\land y)\lor (y\land x)\lor (x\land y\land \underbrace{v\land u}\land x\land y)
\end{equation}
(or its dual) as a characterizing identity for order-closed skew lattices.
\end{proof}

Refining the above question about the variety generated jointly from the varieties of normal
and conormal skew lattices, we ask:

\begin{problem}
Do order-closed, strictly categorical skew lattices form the join variety of the varieties of normal skew lattices and their conormal duals?
\end{problem}

\end{document}